\documentclass[]{amsart}
\usepackage[utf8]{inputenc}
\usepackage{hyperref}
\setlength{\intextsep}{0pt plus 2pt}
\usepackage{amsmath}
\usepackage{amsfonts}
\usepackage{amsthm}
\usepackage{amssymb}
\usepackage{color}
\usepackage{todonotes}
\usepackage{comment}
\usepackage{tikz}
\usepackage{hvfloat}
\usepackage{enumerate}
\usepackage{todonotes}
\usetikzlibrary{decorations.markings}
\usetikzlibrary{arrows}
\usepackage{graphicx}
\usepackage{caption}
\usepackage{subcaption}
\usepackage{mathtools}
\usepackage[all]{xy}

\usepackage{xcolor}

\makeatletter
\renewcommand{\boxed}[1]{\text{\fboxsep=.2em\fbox{\m@th$\displaystyle#1$}}}
\makeatother

\newcommand{\Z}{\mathbb{Z}}

\newcommand{\N}{\mathbb{N}}

\renewcommand{\le}{\leqslant}
\renewcommand{\ge}{\geqslant}

\theoremstyle{plain}
\newtheorem{thm}{Theorem}[section]
\newtheorem{lem}[thm]{Lemma}
\newtheorem{prop}[thm]{Proposition}

\newtheorem{theorem}{Theorem}

\newtheorem{corollary}[theorem]{Corollary}


\makeatletter
\newtheorem*{rep@theorem}{\rep@title}
\newcommand{\newreptheorem}[2]{%
\newenvironment{rep#1}[1]{%
\def\rep@title{#2 \ref{##1}}%
\begin{rep@theorem}}%
{\end{rep@theorem}}}
\makeatother
\newreptheorem{theorem}{Theorem}

\newtheorem*{thm*}{Theorem}
\newtheorem*{lem*}{Lemma}
\newtheorem*{prop*}{Proposition}
\newtheorem*{cor*}{Corollary}
\newtheorem*{qu*}{Question}
\newtheorem*{dt*}{Definition and Theorem}
\newtheorem*{exmp*}{Example}
\newtheorem*{exmps*}{Examples}
\newtheorem*{dprop*}{Definition and Proposition}
\newtheorem*{conj*}{Conjecture}

\theoremstyle{definition}

\newtheorem*{defn*}{Definition}
\newtheorem*{not*}{Notation}

\theoremstyle{plain}
\newtheorem{rem}[thm]{Remark}
\newtheorem*{rem*}{Remark}

\DeclareMathOperator\FSym{FSym}
\DeclareMathOperator\Alt{Alt}
\DeclareMathOperator\FAlt{FAlt}
\DeclareMathOperator\Sym{Sym}

\DeclareMathOperator\Aut{Aut}

\DeclareMathOperator\supp{supp}
\newcommand{\sgn}{\mathrm{sgn}}

\begin{document}
\title{On the finite index subgroups of Houghton's groups}

\author{Charles Garnet Cox}
\address{School of Mathematics, University of Bristol, Bristol BS8 1UG, UK}
\email{charles.cox@bristol.ac.uk}

\thanks{}

\subjclass[2010]{20F05}

\keywords{generation, generation of finite index subgroups, structure of finite index subgroups, infinite groups, Houghton groups, permutation groups, highly transitive groups}
\date{\today}
\begin{abstract}
Houghton's groups $H_2, H_3, \ldots$ are certain infinite permutation groups acting on a countably infinite set; they have been studied, among other things, for their finiteness properties. In this note we describe all of the finite index subgroups of each Houghton group, and their isomorphism types. Using the standard notation that $d(G)$ denotes the minimal size of a generating set for $G$ we then show, for each $n\in \{2, 3,\ldots\}$ and $U$ of finite index in $H_n$, that $d(U)\in\{d(H_n), d(H_n)+1\}$ and characterise when each of these cases occurs.
\end{abstract}
\maketitle
\section{Introduction}
Introduced in \cite{Houghton} by Houghton, the Houghton groups have since attracted attention for their finiteness properties \cite{brown, finwreath}, their growth \cite{Hou2, conjgrowth}, their many interesting combinatorial features \cite{deadend, ConjHou, rinfinityHou2, Wiegold2} as well as other properties.
\begin{defn*} For $X\ne\emptyset$, let $\Sym(X)$ denote the group of all bijections on $X$. For $g\in \Sym(X)$, let $\supp(g):=\{x\in X\;:\;(x)g\ne x\}$, called the \emph{support} of $g$. Then $\FSym(X)=\{g\in\Sym(X)\;:\;|\supp(g)|<\infty\}$ and $\Alt(X)\le\FSym(X)$ consists of only the even permutations, meaning $[\FSym(X)\;:\;\Alt(X)]=2$.
\end{defn*}
We give a brief overview of these groups for our purposes; more detailed introductions can be found, for example, in \cite{ConjHou, Cox1}. We will use right actions throughout. We define $\N:=\{1, 2,\ldots\}$, let $G\le_f H$ denote that $G$ is a finite index subgroup of $H$, and for a group $G$ and $g, h\in G$ let $[g, h]:=g^{-1}h^{-1}gh$ and $g^h:=h^{-1}gh$.
\begin{defn*}
Let $n\in\{3, 4, \ldots\}$. Then the \emph{nth Houghton group}, denoted $H_n$, is generated by $g_2, \ldots, g_n\in \Sym(X_n)$ where $X_n=\{1, \ldots, n\}\times \N$ and for $k\in\{2, \ldots, n\}$,
\begin{equation}\label{thegi}
(i,m)g_k=\left\{\begin{array}{ll}(1, m+1) & \mathrm{if}\ i=1\ \mathrm{and}\ m\in\N\\ (1,1) & \mathrm{if}\ i=k\ \mathrm{and}\ m=1\\ (k, m-1) & \mathrm{if}\ i=k\ \mathrm{and}\ m\in\{2, 3, \ldots\}\\(i, m) & \mathrm{otherwise}.
\end{array}\right.
\end{equation}
\end{defn*}
For each $n\in\{3, 4, \ldots\}$ we have that $\FSym(X_n)\le H_n$. One way to see this is to first compute that $[g_2, g_3]=((1, 1), (1, 2))$, and then observe that any $2$-cycle with support in $X_n$ can be conjugated, using the elements $g_2, \ldots, g_n$, to $((1, 1), (1, 2))$. Furthermore, as observed in \cite{Wiegold2}, we have a short exact sequence of groups
 $$
1\longrightarrow \FSym(X_n)\stackrel{}{\longrightarrow} H_n\stackrel{\pi}{\longrightarrow} \Z^{n-1}\longrightarrow 1.
 $$
Here $\pi$ is induced by defining $\pi(g_i):=e_{i-1}$ for $i=2, \ldots, n$, where $e_i$ denotes the vector in $\Z^{n-1}$ with $i$th entry 1 and other entries 0.
\begin{defn*} The second Houghton group is generated by the two cycle $((1, 1), (1, 2))$ together with the element $g_2$, defined analogously to (\ref{thegi}) above. This is isomorphic to $\FSym(\Z)\rtimes\langle t\rangle$ where $t\in \Sym(\Z)$ sends each $z\in \Z$ to $z+1$.
\end{defn*}

There have been many papers with questions and results relating to the finite index subgroups of this family of groups, e.g.\ the questions on invariable generation in \cite{Wiegold2} and subsequent answers in \cite{invgenhou}, showing they all have solvable conjugacy problem in \cite{Cox1}, and also all have the $R_\infty$ property \cite{rinfinity, Cox2}. Some of these use the partial description of the finite index subgroups from \cite{Hou2}. We start by giving the first full description of them.

\begin{theorem} \label{mainthmA} Let $n\in \{3, 4, \ldots\}$ and $U\le_f H_n$. Then there exist $c_2, \ldots, c_n\in\N$ such that $\pi(U)=\langle c_2e_1, \ldots, c_ne_{n-1}\rangle$ and either $U$ is:
\begin{itemize}
\item[i)] equal to $\langle \FSym(X_n), g_i^{c_i}\;:\;i=2, \ldots, n\rangle$; or
\item[ii)] isomorphic to $\langle \Alt(X_n), g_i^{c_i}\;:\;i=2, \ldots, n\rangle$.
\end{itemize}
If $U\le_fH_2$, then there exists $c_2\in \N$ such that $\pi(U)=\langle c_2\rangle\le\Z$ and either (i) or (ii) above occurs or $U$ is equal to $\langle \Alt(X_2), ((1, 1), (1, 2)g_2)\rangle$.
\end{theorem}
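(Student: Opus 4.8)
The plan is to pin down each $U \le_f H_n$ by two invariants: the intersection $K := U \cap \FSym(X_n)$ and the image $\pi(U) \le \Z^{n-1}$. Since $\FSym(X_n) \le H_n$, the index $[\FSym(X_n):K]$ divides $[H_n:U]$ and is finite, so the first task is to determine the finite index subgroups of $\FSym(X_n)$. Here I would invoke the classical description of the normal subgroups of the finitary symmetric group on an infinite set: the only proper nontrivial normal subgroup of $\FSym(X_n)$ is $\Alt(X_n)$. A finite index subgroup contains its normal core, which is normal of finite (hence nonzero) index and therefore equals $\Alt(X_n)$ or $\FSym(X_n)$; as $[\FSym(X_n):\Alt(X_n)]=2$ this forces $K \in \{\FSym(X_n), \Alt(X_n)\}$. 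This dichotomy is precisely the split between cases (i) and (ii).

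Next I would study $M := \pi(U)$. As $\pi$ is surjective and $[H_n:U]<\infty$, $M$ is a finite index subgroup of $\Z^{n-1}$, so for each $i\in\{2,\dots,n\}$ some positive multiple of $e_{i-1}$ lies in $M$; let $c_i\in\N$ be the least such. The substantive claim is that $M=\langle c_2e_1,\dots,c_ne_{n-1}\rangle$, i.e.\ that $M$ is the coordinate box cut out by the $c_i$, and I expect this to be the main obstacle. My approach would be to reinterpret the coordinates of $\pi$ as translation lengths: each $u\in H_n$ is eventually a translation on every ray $\{i\}\times\N$, the length $t_i(u)$ on ray $i$ defines a homomorphism $t_i\colon H_n\to\Z$, and $\pi(u)=(t_2(u),\dots,t_n(u))$ with $t_1(u)=-\sum_{i\ge 2}t_i(u)$. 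Using that $U$ contains $\Alt(X_n)$, together with suitable products and conjugates of the $g_i$, I would manufacture from an arbitrary $u\in U$ elements of $U$ whose translation lengths vanish on all but one ray, thereby separating the coordinates and showing $c_i\mid t_i(u)$; ruling out any residual coupling between distinct rays is the delicate point.

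Granting the box description of $M$, the reconstruction of $U$ is comparatively routine. For each $i$ choose $w_i\in U$ with $\pi(w_i)=c_ie_{i-1}$; since $\pi(g_i^{c_i})=c_ie_{i-1}$ as well, we may write $w_i=f_i\,g_i^{c_i}$ with $f_i\in\FSym(X_n)$, and because $\ker(\pi|_U)=K$ we obtain $U=\langle K, w_2,\dots,w_n\rangle$. In case (i), $K=\FSym(X_n)$ absorbs each $f_i$, so $U=\langle \FSym(X_n), g_i^{c_i}:i=2,\dots,n\rangle$, giving the asserted equality. In case (ii), $K=\Alt(X_n)$ and the $f_i$ need not lie in $U$; the relevant datum is then the class of $\sgn(f_i)$ in $\FSym(X_n)/\Alt(X_n)$, which is an invariant of $U$ since multiplying $w_i$ by the kernel $\Alt(X_n)$ preserves it. Both $U$ and $\langle \Alt(X_n), g_i^{c_i}\rangle$ are extensions of $M$ by $\Alt(X_n)$ realised by conjugation inside $\Sym(X_n)$, and I would show that for $n\ge 3$ the two or more generators allow these sign twists to be absorbed by an abstract isomorphism, giving $U\cong\langle \Alt(X_n), g_i^{c_i}:i=2,\dots,n\rangle$ as in (ii).

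Finally, the case $n=2$ requires one extra observation, rooted in a genuine difference between the abelianisations. For $n\ge 3$ the identity $[g_2,g_3]=((1,1),(1,2))$ exhibits an odd permutation in $[H_n,H_n]$, whence $[H_n,H_n]=\FSym(X_n)$ and $H_n^{\mathrm{ab}}\cong\Z^{n-1}$ carries no sign data; this is why only (i) and (ii) occur. By contrast, in $H_2$ every commutator of the two generators is a product of two transpositions, hence even, so $[H_2,H_2]=\Alt(X_2)$ and $H_2^{\mathrm{ab}}\cong\Z\times\Z/2\Z$, the extra factor being the (now extending) sign homomorphism. Running the scheme above with the single parameter $c_2$, a lift $w_2=f_2\,g_2^{c_2}$ is constrained by $\sgn(f_2)$, and with only one generator this sign can no longer be absorbed: when $K=\Alt(X_2)$ the trivial sign returns case (ii), whereas the nontrivial sign produces a subgroup of a new isomorphism type. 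A direct computation, which I would carry out to finish, identifies this possibility with $\langle\Alt(X_2),((1,1),(1,2)g_2)\rangle$, completing the classification.
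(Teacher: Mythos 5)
Your opening reduction is sound and essentially the paper's: from normal cores and the normal subgroup structure of $\FSym(X_n)$ (the paper instead uses Lemma \ref{monolithic} applied to $H_n$ itself) you get $\Alt(X_n)\le U$ and the dichotomy $U\cap\FSym(X_n)\in\{\Alt(X_n),\FSym(X_n)\}$. The fatal problem is the step you yourself flag as ``the main obstacle'': the claim that $M=\pi(U)$ equals the coordinate box $\langle c_2e_1,\ldots,c_ne_{n-1}\rangle$. No strategy along the lines you sketch can prove this, because for arbitrary $U\le_f H_n$ it is false. Since $\FSym(X_n)=\ker\pi$, the preimage $\pi^{-1}(B)$ of \emph{any} finite index subgroup $B\le\Z^{n-1}$ is a finite index subgroup of $H_n$ with image exactly $B$, and for $n\ge3$ not every such $B$ is diagonal: for instance $U=\langle \FSym(X_3), g_2g_3, g_3^2\rangle$ has index $2$ in $H_3$ and $\pi(U)=\langle e_1+e_2, 2e_2\rangle=\{(a,b):a\equiv b \bmod 2\}$. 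Here the minimal constants are $c_2=c_3=2$, yet $(1,1)\in\pi(U)$, so for $u=g_2g_3\in U$ one has $c_2\nmid t_2(u)$, and no amount of manufacturing single-ray elements inside $U$ can separate the coordinates --- every element of $U$ has image in $B$, so the ``residual coupling'' between rays genuinely persists. (For comparison: the paper's own proof passes over the same point, asserting (\ref{allfinind}) ``using the minimality of $k_2,\ldots,k_n$'', which is valid only when $\pi(U)$ is already diagonal; so your instinct about where the difficulty sits is exactly right, but it is not a difficulty your route closes.)

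The later steps also diverge from what is actually true. For $n\ge3$, absorbing the sign twists $\epsilon_i$ is indeed how the paper obtains case (ii), but the mechanism you would need is its explicit relabelling of $X_n$: conjugation by a non-finitary permutation swapping $(i,1)$ and $(i,2)$ so that $\epsilon_i g_i^{c_i}$ acts like $g_i^{c_i}$, with a separate relabelling attaching $(i,1)$ to another ray when $c_i=1$; saying the extensions are ``realised by conjugation inside $\Sym(X_n)$'' does not by itself produce such an isomorphism. More seriously, your $n=2$ endgame is inverted. The paper's lemma shows the sign \emph{is} absorbable for every $c_2\ge2$ by the same relabelling trick, so the exceptional case occurs only at $c_2=1$; and even there the paper does not claim a new isomorphism type --- its Remark explicitly leaves open whether $\langle\Alt(X_2),((1,1)\;(1,2))g_2\rangle\cong\langle\Alt(X_2),g_2\rangle$, observing only that no isomorphism can be induced by a permutation of $X_2$. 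Your computation $H_2^{\mathrm{ab}}\cong\Z\times C_2$ is correct, but it only shows that the twisted and untwisted lifts are \emph{distinct subgroups} of $H_2$; it cannot distinguish abstract isomorphism types, since $\langle\Alt(X_2),g_2^{c}\rangle$ and $\langle\Alt(X_2),\epsilon g_2^{c}\rangle$ both have abelianisation $\Z$. So the ``direct computation'' you defer would have to establish something false for $c_2\ge2$, and something the paper itself does not resolve for $c_2=1$.
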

Given $U, U'\le_fH_n$ such that $\pi(U)=\langle c_2e_1, \ldots, c_ne_{n-1}\rangle$ for $c_2, \ldots, c_n\in \N$ and $\pi(U')=\langle d_2e_1, \ldots, d_ne_{n-1}\rangle$ for $d_2, \ldots, d_n\in \N$, one might wonder when $U\cong U'$. Clearly any permutation of the constants $c_2, \ldots, c_n$ produces an isomorphism. By considering $\Aut(U)$ and $\Aut(U')$, it seems that this is the only way for the groups to be isomorphic. Our methods do allow us to obtain the following.
\begin{corollary}\label{corollaryA} Let $n \in \{2, 3, \ldots\}$ and $c_2, \ldots, c_n\in\N$. If $U\le_fH_n$ and $\pi(U)=\langle c_2e_1, \ldots, c_ne_{n-1}\rangle$, then either
\begin{itemize}
\item at least two of $c_2, \ldots, c_n$ are odd and $U=\langle g_2^{c_2}, \ldots, g_n^{c_n}, \FSym(X_n)\rangle$; or
\item $U$ is one of exactly $2^{n-1}+1$ specific subgroups of $H_n$.
\end{itemize} 
\end{corollary}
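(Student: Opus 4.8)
The plan is to leverage Theorem~\ref{mainthmA}, which tells us that any $U\le_f H_n$ with $\pi(U)=L:=\langle c_2e_1,\ldots,c_ne_{n-1}\rangle$ satisfies $U\cap\FSym(X_n)\in\{\FSym(X_n),\Alt(X_n)\}$ --- these being the only finite index subgroups of $\FSym(X_n)$, since $\Alt(X_n)$ is infinite simple of index $2$. I would split into these two cases. If $U\supseteq\FSym(X_n)$, then $U=\pi^{-1}(L)=\langle\FSym(X_n),g_2^{c_2},\ldots,g_n^{c_n}\rangle$, because any two elements of $U$ with equal $\pi$-image differ by an element of $\FSym(X_n)\subseteq U$; this is the single subgroup of type (i) and it occurs for every choice of constants. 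All the counting therefore comes from the case $U\cap\FSym(X_n)=\Alt(X_n)$.

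For that case I would pass to the quotient $\mathcal H:=H_n/\Alt(X_n)$. Since conjugation preserves the parity of a finitely supported permutation, the subgroup $Z:=\FSym(X_n)/\Alt(X_n)\cong\Z/2$ is central in $\mathcal H$, so that
$$
1\longrightarrow Z\longrightarrow \mathcal H\stackrel{\bar\pi}{\longrightarrow}\Z^{n-1}\longrightarrow 1
$$
is a central extension. Subgroups $U\supseteq\Alt(X_n)$ with $U\cap\FSym(X_n)=\Alt(X_n)$ and $\pi(U)=L$ correspond bijectively (via preimage in $H_n$) to the complements of $Z$ in $\bar\pi^{-1}(L)$. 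The key computation is that $[g_i,g_j]$ is an odd permutation for every $i\ne j$ (generalising $[g_2,g_3]=((1,1),(1,2))$), so in $\mathcal H$ we have $[\bar g_i,\bar g_j]=z$, the nontrivial element of $Z$, and hence $[\bar g_i^{\,c_i},\bar g_j^{\,c_j}]=z^{c_ic_j}$.

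From here the dichotomy is exactly the parity of the products $c_ic_j$. If at least two of the $c_i$ are odd, some $c_ic_j$ is odd, so any subgroup of $\mathcal H$ projecting isomorphically onto $L$ would have to be abelian yet contain $z$, which is impossible; thus no complement exists, $U\cap\FSym(X_n)=\Alt(X_n)$ cannot occur, and $U$ must be the unique type (i) subgroup $\langle g_2^{c_2},\ldots,g_n^{c_n},\FSym(X_n)\rangle$, giving the first bullet. If instead at most one $c_i$ is odd, then every $c_ic_j$ is even, the $\bar g_i^{\,c_i}$ pairwise commute, and $\langle\bar g_2^{\,c_2},\ldots,\bar g_n^{\,c_n}\rangle$ is a complement; so complements exist, and as $Z$ is central the set of complements is a torsor under $\mathrm{Hom}(L,Z)\cong(\Z/2)^{n-1}$, giving exactly $2^{n-1}$ subgroups of type (ii). Together with the single type (i) subgroup this produces exactly $2^{n-1}+1$ subgroups $U$ with $\pi(U)=L$, the second bullet. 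The case $n=2$ is covered automatically: the hypothesis ``at least two odd'' is vacuous, so one is always in the second case, and $2^{n-1}+1=3$ recovers the three subgroups listed in Theorem~\ref{mainthmA}.

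The main obstacle I anticipate is the bookkeeping that makes the count \emph{exact} rather than an upper bound: concretely, showing that the $2^{n-1}$ parity choices $h_i\in\{g_i^{c_i},\,g_i^{c_i}\tau\}$ (for $\tau$ a fixed transposition) give genuinely distinct subgroups and that every type (ii) subgroup arises this way. I would handle this through the torsor description above --- equivalently, by checking that two sections of $\bar\pi^{-1}(L)\to L$ agreeing modulo $Z$ on each $\bar g_i^{\,c_i}$ have the same image, while distinct parity vectors force distinct images. The remaining ingredients are short direct computations: centrality of $Z$ in $\mathcal H$ (parity is a conjugacy invariant) and the oddness of each $[g_i,g_j]$.
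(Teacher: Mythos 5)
Your proof is correct, and while it shares its skeleton with the paper's argument --- both reduce, via simplicity of $\Alt(X_n)$, to the dichotomy $U\cap\FSym(X_n)\in\{\FSym(X_n),\Alt(X_n)\}$, and both hinge on the same parity fact, that $[g_i^{c_i},g_j^{c_j}]$ is odd exactly when $c_i$ and $c_j$ are both odd --- your handling of the second case is genuinely different in mechanism. The paper stays inside $H_n$: its equation (\ref{allfinind}) expresses any such $U$ as $\langle \epsilon_2g_2^{c_2},\ldots,\epsilon_ng_n^{c_n},\Alt(X_n)\rangle$ with each $\epsilon_i\in\{1,((1,1)\,(1,2))\}$, which gives the $2^{n-1}$ candidates by direct enumeration, and then Lemma \ref{setIn} (a word/presentation argument built on Remark \ref{sgn}) together with Lemma \ref{setIn2} decides when these collapse into $F_{\bf c}$. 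You instead pass to the class-two central extension $\mathcal{H}=H_n/\Alt(X_n)$ and recast everything as counting complements of $Z\cong \Z/2$: the obstruction is the bilinear commutator pairing $[\bar g_i^{\,c_i},\bar g_j^{\,c_j}]=z^{c_ic_j}$, and when it vanishes the complements form a torsor under $\mathrm{Hom}(L,Z)\cong(\Z/2)^{n-1}$. This buys you two things the paper leaves implicit: the distinctness of the $2^{n-1}+1$ subgroups (the word ``exactly'') falls out of the bijection between complements and splittings, and the paper's word-theoretic Lemma \ref{setIn} is replaced by the one-line observation that a complement maps isomorphically onto the abelian group $L$ yet would contain the commutator $z$. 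What the paper's route buys in exchange is reusability: Remark \ref{sgn} and Lemmas \ref{setIn}, \ref{setIn2} are precisely the tools needed again for the generation results (Proposition \ref{genofGc} and Lemmas \ref{commsubgroup}, \ref{needmoregens}), so none of that work is wasted. One step you should make explicit: the claim that $\langle\bar g_2^{\,c_2},\ldots,\bar g_n^{\,c_n}\rangle$ is a complement requires checking it meets $Z$ trivially; this is easy --- since its generators commute, any element of it in $\ker\bar\pi$ has the form $\prod_i\bar g_i^{\,c_im_i}$ with $\sum_i c_im_ie_{i-1}=0$, forcing every $m_i=0$ --- but it is the exact point where the paper needs its Lemma \ref{setIn}, so it should not be waved through.
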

We then extend the work in \cite{infspread}, where the groups $\langle \Alt(X_2), g_2^c\rangle$ were shown to be $2$-generated for each $c\in \N$, by investigating the generation properties of each of these groups.

\begin{not*} For a finitely generated group $G$, let $d(G):=\min\{|S|\;:\;\langle S\rangle=G\}$.
\end{not*}

Every finite index subgroup of a finitely generated group is itself finitely generated. In the case of $\Z^n$, where $n\in \N$, we have that each $A\le_f \Z^n$ satisfies $d(A)=d(\Z^n)$. There has been much work related to this notion, e.g.\ \cite{othergenspaper} and papers verifying the Nielsen–Schreier formula for families of groups other than free groups. Our second theorem states that there is a close connection between the minimal number of generators of a Houghton group and its finite index subgroups. Note that $d(H_2)=2$ and $d(H_n)=n-1$ for $n\in\{3, 4, \ldots\}$. 

\begin{theorem} \label{mainthmB} If $U\le_f H_2$, then $d(U)=d(H_2)$. For $n\in \{3, 4 \ldots\}$ and $U\le_fH_n$, we have that $d(U)\in\{d(H_n), d(H_n)+1\}$. Furthermore, let $\pi(U)=\langle c_2e_1, \ldots, c_ne_{n-1}\rangle$. Then $d(U)=d(H_n)+1$ occurs exactly when both of the following conditions are met:
\begin{enumerate}[i)]
\item that $\FSym(X_n)\le U$; and
\item either one or zero elements in $\{c_2, \ldots, c_n\}$ are odd.
\end{enumerate}
\end{theorem}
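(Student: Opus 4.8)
The plan is to sandwich $d(U)$ between a lower bound coming from the abelianisation $U^{\mathrm{ab}}=U/[U,U]$ and an upper bound coming from an explicit generating set, and then to show the two bounds coincide. Since any generating set of $U$ projects to a generating set of $U^{\mathrm{ab}}$ we always have $d(U)\ge d(U^{\mathrm{ab}})$, and for a finitely generated abelian group $A$ one has $d(A)=\max_p\dim_{\mathbb{F}_p}(A/pA)$. Writing $V:=\langle g_2^{c_2},\ldots,g_n^{c_n}\rangle$, Theorem~\ref{mainthmA} tells us that $U$ is an extension
$$1\longrightarrow K\longrightarrow U\stackrel{\pi}{\longrightarrow}\Z^{n-1}\longrightarrow 1,$$
where $K=U\cap\FSym(X_n)$ equals either $\FSym(X_n)$ (case (i)) or $\Alt(X_n)$ (case (ii)). First I would record the key sign computation: for $i\ne j$ the commutator $[g_i^{a},g_j^{b}]$ is a single $(ab+1)$-cycle supported on ray $1$, so that $\sgn[g_i^{a},g_j^{b}]=(-1)^{ab}$. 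This is a direct trace of the permutation, and it is the only place where the precise arithmetic of the $c_i$ enters.

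Next I would compute $U^{\mathrm{ab}}$ via coinvariants. Abelianising the extension gives a right-exact sequence $K/[K,U]\to U^{\mathrm{ab}}\to\Z^{n-1}\to 0$. Because every commutator $[f,u]$ with $f\in\FSym(X_n)$ is a product of a finitary permutation and a conjugate of its inverse, it is even; hence $[\FSym(X_n),U]=\Alt(X_n)$, and likewise $[\Alt(X_n),U]=\Alt(X_n)$ since $\Alt(X_n)$ is perfect. Thus the coinvariants $K/[K,U]$ are trivial in case (ii) and are $\FSym(X_n)/\Alt(X_n)\cong\Z/2\Z$ in case (i). The image of this $\Z/2\Z$ in $U^{\mathrm{ab}}$ survives precisely when $[U,U]\cap\FSym(X_n)=\Alt(X_n)$, which by the sign computation happens exactly when no two of the $c_i$ are odd. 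Since $\Z^{n-1}$ is free the resulting extension splits, so $U^{\mathrm{ab}}\cong\Z^{n-1}\times\Z/2\Z$ when $\FSym(X_n)\le U$ and at most one $c_i$ is odd, and $U^{\mathrm{ab}}\cong\Z^{n-1}$ in every other case. Evaluating $\max_p\dim_{\mathbb{F}_p}(U^{\mathrm{ab}}/pU^{\mathrm{ab}})$ then yields the lower bounds $d(U)\ge n$ and $d(U)\ge n-1$ respectively, matching conditions (i)--(ii) of the statement.

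For the upper bounds I would prove the generation lemma that $V$ contains $\Alt(X_n)$ for all $c_2,\ldots,c_n\in\N$ (after, if necessary, replacing some $g_i^{c_i}$ by a product with an element of $\Alt(X_n)$, which changes neither $\pi(V)$ nor the number of generators). Granting this, in case (ii) we get $U=\langle\Alt(X_n),g_i^{c_i}\rangle=V$, so $d(U)\le n-1$; when at least two $c_i$ are odd the odd commutator $[g_i^{c_i},g_j^{c_j}]$ lies in $V$, so $V\supseteq\langle\Alt(X_n),\text{odd element}\rangle=\FSym(X_n)$ and again $U=V$ with $d(U)\le n-1$; and in the remaining case adjoining a single transposition $\tau$ gives $U=\langle g_2^{c_2},\ldots,g_n^{c_n},\tau\rangle$, so $d(U)\le n$. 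Combined with the lower bounds this proves the $n\ge 3$ statement. The group $H_2$ is handled separately: any $U\le_f H_2$ contains a nontrivial finitary permutation yet surjects onto $\Z$, so it is noncyclic and $d(U)\ge2$, while the $2$-generation results of \cite{infspread} (applied as in the generation lemma) give $d(U)\le 2$.

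The main obstacle is the generation lemma, i.e.\ showing $\Alt(X_n)\le V$. The $(c_ic_j+1)$-cycles $[g_i^{c_i},g_j^{c_j}]$ and their conjugates under $V$ are supported on ray $1$, and conjugation by $g_i^{c_i}$ both translates these supports along ray $1$ and transports points between ray $1$ and ray $i$; the work is to organise these overlapping cycles so that their supports form a connected pattern covering $X_n$ and so that locally they generate the full alternating group, for instance by repeatedly using that two alternating groups whose supports meet in enough points generate the alternating group on the union. This is exactly the step that refines and extends the argument of \cite{infspread} from $H_2$ to arbitrary constants $c_2,\ldots,c_n$, and it is where essentially all of the effort lies.
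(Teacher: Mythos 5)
Your skeleton is the same as the paper's: a lower bound from the abelianisation (your coinvariants computation giving $U^{\mathrm{ab}}\cong\Z^{n-1}\times C_2$ exactly when $\FSym(X_n)\le U$ and at most one $c_i$ is odd is the paper's Lemma \ref{commsubgroup} plus Lemma \ref{needmoregens}), a parity criterion for when $F_{\bf c}=G_{\bf c}$ (the paper's Lemmas \ref{setIn} and \ref{setIn2}), and an explicit generation result for the upper bound (the paper's Proposition \ref{genofGc}). One intermediate claim is simply false, though harmlessly so: $[g_i^{a},g_j^{b}]$ is not in general a single $(ab+1)$-cycle. For instance $[g_2^{2},g_3^{2}]=((1,1)\,(1,3))\,((1,2)\,(1,4))$, a product of two transpositions; in general the commutator acts on an initial segment of $R_1$ of length $a+b$ (not $ab+1$) as addition of $b$ modulo $a+b$. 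The sign formula $\sgn[g_i^{a},g_j^{b}]=(-1)^{ab}$ that you actually use survives, and the paper obtains it without any such computation, by reducing exponents to $1$ or $2$ via multiplicativity of $\sgn$ on commutators (Remark \ref{sgn}).

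The genuine gap is your ``generation lemma'', which you acknowledge carries essentially all the effort but do not prove; worse, the strategy you sketch for it cannot succeed. You propose to generate $\Alt(X_n)$ from the commutators $[g_i^{c_i},g_j^{c_j}]$ and their conjugates under $V=\langle g_2^{c_2},\ldots,g_n^{c_n}\rangle$. When every $c_i$ is even this is impossible: $V$ preserves the $2$-colouring $\chi$ of $X_n$ defined by $\chi(1,m)=m\bmod 2$ and $\chi(i,m)=(m+1)\bmod 2$ for $i\ge 2$ (each $g_i^{2}$ preserves $\chi$, as one checks on $(i,1)\mapsto(1,2)$, $(i,2)\mapsto(1,1)$, and the shifts), so every element of $V$ --- in particular every conjugate of every $[g_i^{c_i},g_j^{c_j}]$ and anything these generate --- preserves $\chi$, whereas the $3$-cycle $((1,1)\,(1,2)\,(1,3))\in\Alt(X_n)$ does not. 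Hence $\Alt(X_n)\not\le V$, and your parenthetical ``after, if necessary, replacing some $g_i^{c_i}$ by a product with an element of $\Alt(X_n)$'' is not an optional normalisation: constructing that correcting element is precisely the content of Proposition \ref{genofGc}, where the paper adapts \cite[Lem.~3.7]{infspread} by planting permutations $\sigma_0,\ldots,\sigma_{r+1}$ in disjoint translated blocks $(\Omega^*)g_3^{-4jk}$ to form $\alpha$, passing to the derived element $\sigma_\alpha=[\alpha g_2^{c_2},g_3^{2k}]$, and recovering a full set of generators of $\Alt(\Omega^*)$ from commutators of the translates $\beta_m=g_3^{-4mk}\sigma_\alpha g_3^{4mk}$. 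Without this construction (or a substitute for it) your upper bound $d(U)\le n-1$ in the cases $U\cong G_{\bf c}$ and $U=F_{\bf c}=G_{\bf c}$ is unsupported, and with it the theorem; the same applies to the exceptional subgroup $\langle\Alt(X_2),((1,1)\,(1,2))g_2\rangle$ of $H_2$, which the paper handles by a separate hands-on lemma.
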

Our proof involves providing a generating set. Theorem \ref{mainthmA} allows us to replace $U$ with either $\langle \FSym(X_n), g_2^{c_2}, \ldots, g_n^{c_n}\rangle$ or $\langle \Alt(X_n), g_2^{c_2}, \ldots, g_n^{c_n}\rangle$. Proposition \ref{genofGc} states that, in the second case, $d(U)=n-1$. Lemma \ref{setIn} and Lemma \ref{setIn2} combine to tell us that $\langle \FSym(X_n), g_2^{c_2}, \ldots, g_n^{c_n}\rangle=\langle \Alt(X_n), g_2^{c_2}, \ldots, g_n^{c_n}\rangle$ when there are distinct $i, j \in \{2, \ldots, n\}$ that are both odd. Thus the only remaining possibility is that $U=\langle \FSym(X_n), g_2^{c_2}, \ldots, g_n^{c_n}\rangle\ne \langle \Alt(X_n), g_2^{c_2}, \ldots, g_n^{c_n}\rangle$. Lemma \ref{needmoregens} shows that in this case $d(U)=d(H_n)+1$, by determining that the abelianization of $U$, with these conditions, is $C_2\times \Z^{n-1}$. This complete categorisation provides us with subgroups of the Houghton groups with constant minimal number of generators on finite index subgroups.
\begin{corollary} \label{corollaryB} Let $n\in \{3, 4, \ldots\}$ and define $G_{\bf 2}:=\langle \Alt(X_n), g_2^2, \ldots, g_n^2\rangle\le_f H_n$. If $U\le_f G_{\bf 2}$, then $d(U)=d(G_{\bf 2})=d(H_n)$.
\end{corollary}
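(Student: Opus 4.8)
The whole statement reduces, via Theorem \ref{mainthmB}, to a single structural fact: that $G_{\bf 2}$ does not contain $\FSym(X_n)$, equivalently that $G_{\bf 2}\cap\FSym(X_n)=\Alt(X_n)$. So the first thing I would do is verify this. Since $\pi(g_i^2)=2e_{i-1}$ and $\Alt(X_n)\le\ker\pi$, we have $\pi(G_{\bf 2})=\langle 2e_1,\ldots,2e_{n-1}\rangle$, so by Theorem \ref{mainthmA} the group $G_{\bf 2}$ is either $\langle\FSym(X_n),g_2^2,\ldots,g_n^2\rangle$ (case (i)) or falls into case (ii). As $[\FSym(X_n):\Alt(X_n)]=2$, the subgroup $G_{\bf 2}\cap\FSym(X_n)$, which contains $\Alt(X_n)$, equals either $\Alt(X_n)$ or $\FSym(X_n)$; I claim the former. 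Indeed, adjoining the even powers $g_2^2,\ldots,g_n^2$ to $\Alt(X_n)$ yields no odd finitary permutation — concretely, the transposition $((1,1),(1,2))$ is not in $G_{\bf 2}$ — which is exactly the assertion that $G_{\bf 2}$ is of type (ii) in Theorem \ref{mainthmA}. This is the content of Lemmas \ref{setIn} and \ref{setIn2}, whose hypothesis of two odd exponents fails here since every exponent equals $2$.

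With $G_{\bf 2}\cap\FSym(X_n)=\Alt(X_n)$ in hand, the rest is short. Applying Theorem \ref{mainthmB} to $G_{\bf 2}\le_f H_n$: its exponents are all even, so condition (ii) there is met, but condition (i) fails because $\FSym(X_n)\not\le G_{\bf 2}$; hence $d(G_{\bf 2})=d(H_n)=n-1$. Now let $U\le_f G_{\bf 2}$. Finite index is transitive, so $U\le_f H_n$ and Theorem \ref{mainthmB} applies to $U$ as well. Because $U\le G_{\bf 2}$ we have
$$U\cap\FSym(X_n)\le G_{\bf 2}\cap\FSym(X_n)=\Alt(X_n)\subsetneq\FSym(X_n),$$
so $\FSym(X_n)\not\le U$. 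Thus condition (i) of Theorem \ref{mainthmB} fails for $U$, and therefore $d(U)=d(H_n)=n-1=d(G_{\bf 2})$, which is exactly the claim.

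The only genuine work sits in the first paragraph: establishing that $G_{\bf 2}$ is of type (ii), i.e.\ that no product of elements of $\Alt(X_n)$ with the even shifts $g_i^{\pm 2}$ can be an odd finite-support permutation. Once this parity fact is secured, the argument is purely formal — monotonicity of the intersection with $\FSym(X_n)$ under passage to subgroups, followed by a single invocation of Theorem \ref{mainthmB}. I would therefore expect the crux to be this type-(ii) verification, which is precisely where the evenness of the exponents $c_i=2$ is used; the monotonicity step $U\le G_{\bf 2}\Rightarrow U\cap\FSym(X_n)\le G_{\bf 2}\cap\FSym(X_n)$ is immediate and requires no further input.
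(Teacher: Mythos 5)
Your proposal is correct and takes essentially the same route as the paper, which states Corollary \ref{corollaryB} as an immediate consequence of Theorem \ref{mainthmB}: one checks via Lemmas \ref{setIn} and \ref{setIn2} that $(2,\ldots,2)\in I_n$, so $G_{\bf 2}\cap\FSym(X_n)=\Alt(X_n)$, and hence condition (i) of Theorem \ref{mainthmB} fails for $G_{\bf 2}$ and, by the monotonicity of intersection with $\FSym(X_n)$, for every $U\le_f G_{\bf 2}$. Your write-up simply makes explicit the steps the paper leaves implicit; no gap.
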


\vspace{0.1cm}
\noindent\textbf{Acknowledgements.} I thank the anonymous referee and the editor for their excellent comments.

\section{The structure of finite index subgroups of $H_n$}
In this section we prove Theorem \ref{mainthmA}. The following is well known.
\begin{lem}\label{wellknown} Given $U\le_fG$, there exists $N\le_fU$ which is normal in $G$.
\end{lem}
Some structure regarding finite index subgroups of $H_n$ is known; the following is particularly useful to us.
\begin{lem} \cite[Prop. 2.5]{Cox2}\label{monolithic} Let $X$ be a non-empty set and $\Alt(X)\le G\le \Sym(X)$. Then $G$ has $\Alt(X)$ as a unique minimal normal subgroup.
\end{lem}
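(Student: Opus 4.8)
The plan is to deduce the statement from three standard facts about the finitary alternating group: that $\Alt(X)\trianglelefteq\Sym(X)$, that $\Alt(X)$ is simple, and that its centraliser $C_{\Sym(X)}(\Alt(X))$ is trivial. I will treat the case of interest, where $X$ is infinite, so that in particular $|X|\ge 5$. Normality is immediate: for $\sigma\in\Sym(X)$ and $g\in\Alt(X)$ the conjugate $\sigma^{-1}g\sigma$ has finite support $(\supp g)\sigma$ and the same cycle type, hence the same sign, as $g$, so it again lies in $\Alt(X)$; thus $\Alt(X)\trianglelefteq\Sym(X)$ and in particular $\Alt(X)\trianglelefteq G$ for every $G$ with $\Alt(X)\le G\le\Sym(X)$. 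Triviality of the centraliser I would check by hand: if $\sigma$ commutes with every $3$-cycle but $(a)\sigma=a'\ne a$ for some $a$, then choosing $b,c$ distinct from one another and from both $a$ and $a'$ gives $\sigma^{-1}(a\,b\,c)\sigma=(a'\;(b)\sigma\;(c)\sigma)$, which moves $a'$ whereas $(a\,b\,c)$ fixes it, a contradiction; hence $\sigma=\id$. The simplicity of $\Alt(X)$ is the one genuinely classical input, and I expect it to be the main obstacle; I would either cite it or reprove it by reduction to the finite case, namely a nontrivial $N\trianglelefteq\Alt(X)$ contains some $g\ne\id$ of finite support $S$, and enlarging $S$ to a finite $T\supseteq S$ with $|T|\ge 5$ makes $N\cap\Alt(T)$ a nontrivial normal subgroup of the simple group $\Alt(T)\cong A_{|T|}$, forcing $\Alt(T)\le N$; letting $T$ grow yields $N=\Alt(X)$.

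Granting these, I would first show that $\Alt(X)$ is a minimal normal subgroup of $G$. If $N\trianglelefteq G$ with $N\ne 1$ and $N\le\Alt(X)$, then $\Alt(X)\le G$ normalises $N$, so $N\trianglelefteq\Alt(X)$; simplicity of $\Alt(X)$ then forces $N=\Alt(X)$. Hence no normal subgroup of $G$ lies strictly between $1$ and $\Alt(X)$, which is exactly minimality.

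For uniqueness, suppose $M$ is a minimal normal subgroup of $G$ with $M\ne\Alt(X)$. Then $M\cap\Alt(X)\trianglelefteq G$ and is contained in $M$, so minimality of $M$ gives $M\cap\Alt(X)\in\{1,M\}$. If $M\cap\Alt(X)=M$ then $M\le\Alt(X)$, whence $M=\Alt(X)$ by the previous paragraph, contradicting $M\ne\Alt(X)$. Thus $M\cap\Alt(X)=1$, and since $M$ and $\Alt(X)$ are both normal in $G$ the commutator satisfies $[M,\Alt(X)]\le M\cap\Alt(X)=1$. Therefore $M\le C_{\Sym(X)}(\Alt(X))=1$, contradicting $M\ne 1$. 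Hence $\Alt(X)$ is the unique minimal normal subgroup of $G$. The only place where real work is hidden is the simplicity of $\Alt(X)$ and the triviality of its centraliser; everything else is the routine normal-subgroup bookkeeping above.
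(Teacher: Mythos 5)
Your proof is correct, but there is nothing in the paper to compare it against: the paper does not prove this lemma at all, it imports it wholesale via the citation \cite[Prop.~2.5]{Cox2}. So your write-up is a genuine self-contained argument, and it is the standard one: normality of $\Alt(X)$ in $\Sym(X)$, simplicity of the finitary alternating group on an infinite set, and triviality of $C_{\Sym(X)}(\Alt(X))$, followed by routine bookkeeping; all three ingredients and the bookkeeping check out. Two remarks. First, your restriction to infinite $X$ is not a shortcut but a necessity: with $X$ merely non-empty the statement is false, e.g.\ for $|X|=4$ the unique minimal normal subgroup of both $\Alt(X)$ and $\Sym(X)$ is the Klein four-group, so the lemma must be read with the standing assumption (in force throughout this paper and in \cite{Cox2}) that $X$ is infinite. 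Second, a point of substance about how the lemma is used: in Section 2 the paper deduces, from $U$ containing a nontrivial normal subgroup $N$ of $H_n$, that $\Alt(X_n)\le N\le U$. That deduction needs the monolith property that \emph{every} nontrivial normal subgroup of $G$ contains $\Alt(X)$, which for infinite groups is a priori stronger than uniqueness of the minimal normal subgroup, since a nontrivial normal subgroup of an infinite group need not contain any minimal normal subgroup. Your argument yields the stronger form after a one-line rearrangement: for any nontrivial $N\trianglelefteq G$, the subgroup $N\cap\Alt(X)$ is normal in $\Alt(X)$, hence by simplicity it is either all of $\Alt(X)$, giving $\Alt(X)\le N$, or trivial, in which case $[N,\Alt(X)]\le N\cap\Alt(X)=1$ forces $N\le C_{\Sym(X)}(\Alt(X))=1$, a contradiction. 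You should phrase the uniqueness step that way (replacing minimality of $M$ by simplicity of $\Alt(X)$); as literally written, your conclusion does not quite cover the use the paper makes of this lemma.
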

Let $n\in \{2, 3, \ldots\}$ and $U\le_f H_n$. By Lemma \ref{wellknown}, $U$ contains a normal subgroup of $H_n$ and so, by Lemma \ref{monolithic}, $\Alt(X_n)\le U$. Furthermore, $U\le_f H_n$ and so $\pi(U)\le_f\pi(H_n)$ where $\pi: H_n\rightarrow \Z^{n-1}$ sends $g_i$ to $e_{i-1}$ for $i=2, \ldots, n$. Hence there exist minimal $k_2, \ldots, k_n\in \N$ such that $\pi(g_i^{k_i})\in\pi(U)$ for each $i\in\{2, \ldots, n\}$. But $\pi(g_i^{k_i})=k_ie_{i-1}$, and so the preimage in $H_n$ of $\pi(g_i^{k_i})$ is $\{\sigma g_i^{k_i}\;:\;\sigma\in\FSym(X_n)\}$. If $\FSym(X_n)\le U$, then $g_2^{k_2}, \ldots, g_n^{k_n} \in U$. Otherwise $\FSym(X_n)\cap U=\Alt(X_n)$ and, for each $i\in\{2, \ldots, n\}$, either $g_i^{k_i}\in U$ or there exists $\omega_i\in \FSym(X_n)\setminus\Alt(X_n)$ such that $\omega_i g_i^{k_i}\in U$; since $\Alt(X_n)\le U$, we can specify that $\omega_i=((1, 1)\;(1, 2))$. Then, using the minimality of $k_2, \ldots, k_n$,
\begin{equation}\label{allfinind}
U=\langle g_2^{k_2}, \ldots, g_n^{k_n}, \FSym(X_n)\rangle\text{ or }U=\langle \epsilon_2g_2^{k_2}, \ldots, \epsilon_ng_n^{k_n}, \Alt(X_n)\rangle
\end{equation}
where each $\epsilon_i$ is either trivial or $((1, 1)\;(1, 2))$. We now describe the isomorphism type for these subgroups. To do this, we introduce two families of finite index subgroups in $H_n$ where $n\in \{2, 3, \ldots\}$.
\begin{not*} For any given $n\in\{2, 3, \ldots\}$ and any ${\bf c}=(c_2, \ldots, c_n)\in\N^{n-1}$, let $F_{\bf c}:=\langle \FSym(X_n), g_2^{c_2}, \ldots, g_n^{c_n}\rangle$ and $G_{\bf c}:=\langle \Alt(X_n), g_2^{c_2}, \ldots, g_n^{c_n}\rangle$.
\end{not*}
There are some ${\bf c}\in\N^n$ such that $G_{\bf c}=F_{\bf c}$, e.g.\ if $n\ne2$ and $c_2=\cdots=c_n=1$.
\begin{not*} Let $I_n:=\{{\bf c}=(c_2, \ldots, c_n)\in \N^n\;:\;G_{\bf c}\ne F_{\bf c}\}$.
\end{not*}
Lemma \ref{setIn} and Lemma \ref{setIn2} together describe $I_n$. This description, together with (\ref{allfinind}), yields Corollary \ref{corollaryA}.
\begin{not*} For $n\in \{2, 3, \ldots\}$ and $i\in \{1, \ldots, n\}$, let $R_i:=\{(i,m)\;:\;m\in\N\}\subset X_n$.
\end{not*}
\begin{prop} Let $n\in \{3, 4, \ldots\}$, ${\bf c}=(c_2, \ldots, c_n)\in I_n$, each $\epsilon_2, \ldots, \epsilon_n$ be either trivial or $((i, 1)\;(i, 2))$, and $U=\langle \Alt(X_n), \epsilon_ig_i^{c_i}\;:\; i=2, \ldots, n, \rangle\le_f H_n$. Then $U$ is isomorphic to $G_{\bf c}$.
\end{prop}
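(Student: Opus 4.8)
The plan is to produce a single permutation $\sigma\in\Sym(X_n)$ with $\sigma^{-1}U\sigma=G_{\bf c}$. Since $\Alt(X_n)$ is normal in $\Sym(X_n)$ we automatically have $\sigma^{-1}\Alt(X_n)\sigma=\Alt(X_n)$, so it suffices to arrange that each conjugate $\sigma^{-1}(\epsilon_i g_i^{c_i})\sigma$ lies in $G_{\bf c}$ and that these, together with $\Alt(X_n)$, generate $G_{\bf c}$. Concretely I would aim for $\sigma^{-1}(\epsilon_i g_i^{c_i})\sigma=\alpha_i g_i^{c_i}$ with $\alpha_i\in\Alt(X_n)$ for each $i$: such an element lies in $G_{\bf c}$ and has the same image $c_ie_{i-1}$ under $\pi$ as $g_i^{c_i}$, so the conjugates generate $G_{\bf c}$ modulo $\Alt(X_n)$ exactly as the $g_i^{c_i}$ do.

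First I would reduce to a single ray. The permutations $\epsilon_i$ and $g_i^{c_i}$ are supported on $R_1\cup R_i$, and the transpositions $\epsilon_i=((i,1)\;(i,2))$ are supported on the pairwise disjoint rays $R_2,\ldots,R_n$, none of which meets $R_1$. Hence if I choose each $\sigma_i$ to be supported on $R_i$ alone (fixing $R_1$ and every $R_j$ with $j\ne i$), the $\sigma_i$ commute with one another and $\sigma_j$ commutes with $\epsilon_i g_i^{c_i}$ whenever $j\ne i$. Setting $\sigma:=\prod_i\sigma_i$ then gives $\sigma^{-1}(\epsilon_i g_i^{c_i})\sigma=\sigma_i^{-1}(\epsilon_i g_i^{c_i})\sigma_i$, so the problem factors over the rays and it is enough to treat one $i$ with $\epsilon_i$ nontrivial (for the trivial ones take $\sigma_i=\id$).

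The core computation is that $\epsilon_i g_i^{c_i}$ and $g_i^{c_i}$ have the same cycle type in $\Sym(X_n)$. A direct check on the bi-infinite line $R_i\cup R_1$, on which $g_i$ is the shift, shows that both elements consist of $c_i$ infinite cycles together with countably many fixed points and no finite cycle of length $\geq 2$; for $c_i=1$ the transposition $\epsilon_i$ merely turns $(i,1)$ into one extra fixed point, which is absorbed into the infinitely many fixed points already present on $R_2,\ldots,\widehat{R_i},\ldots,R_n$. Thus $\epsilon_i g_i^{c_i}$ is conjugate to $g_i^{c_i}$, and more flexibly to $\alpha_i g_i^{c_i}$ for a suitable $\alpha_i\in\Alt(X_n)$, by some element of $\Sym(X_n)$; the remaining task is to realise such a conjugacy by a $\sigma_i$ supported on $R_i$.

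The main obstacle is parity. Because ${\bf c}\in I_n$, the subgroup $G_{\bf c}$ has index $2$ in $F_{\bf c}$, so there is a homomorphism $\psi\colon F_{\bf c}\to\{\pm1\}$ with kernel $G_{\bf c}$; it restricts to $\sgn$ on $\FSym(X_n)$ (whose intersection with $G_{\bf c}$ is $\Alt(X_n)$) and kills each $g_i^{c_i}$. Applying $\psi$ yields two facts. First, $U\cap\FSym(X_n)=\Alt(X_n)$: any $w\in U\cap\FSym(X_n)$ has trivial $\pi$-image, hence uses each generator $\epsilon_i g_i^{c_i}$ with total exponent $0$, so $\sgn(w)=\psi(w)=1$ and $w$ is even. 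Second, a conjugator $\sigma_i$ of finite support cannot work: for such $\sigma_i$ the ratio $\bigl(\sigma_i^{-1}(\epsilon_i g_i^{c_i})\sigma_i\bigr)(g_i^{c_i})^{-1}=(\sigma_i^{-1}\epsilon_i\sigma_i)\bigl(\sigma_i^{-1}\sigma_i^{g_i^{-c_i}}\bigr)$ lies in $\FSym(X_n)$, and since conjugation preserves $\sgn$ on $\FSym(X_n)$ its sign is $(-1)(+1)=-1$, so the conjugate never lands in $G_{\bf c}$. This is precisely why $\sigma_i$ must have infinite support, and it is the infinite support on $R_i$ that supplies the missing unit of parity and lets me steer the conjugate into the even coset $\alpha_i g_i^{c_i}$. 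I would finish by exhibiting such a $\sigma_i$ explicitly on $R_i$ — an infinite rearrangement chosen so that the defect of $\epsilon_i g_i^{c_i}$ at $(i,1),(i,2)$ is pushed out to infinity along the ray with one compensating swap — and verifying directly that $\sigma^{-1}U\sigma=G_{\bf c}$, whence $U\cong G_{\bf c}$.
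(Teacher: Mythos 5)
Your overall strategy --- realising the isomorphism as conjugation by some $\sigma\in\Sym(X_n)$ built ray by ray --- is essentially the paper's relabelling argument made precise, and your parity observation (no finitary conjugator can possibly work) is correct; it is in fact sharper than the paper's own sketch, whose stated relabelling swaps only the two labels $(i,1),(i,2)$ and is therefore finitary, hence cannot do what is claimed as literally described. For $c_i\ge 2$ your plan can indeed be completed: the swap has to be repeated all the way down the ray, e.g.\ $\sigma_i=\prod_{k\ge 0}\bigl((i,kc_i+1)\;(i,kc_i+2)\bigr)$ is supported on $R_i$ and conjugates $\epsilon_i g_i^{c_i}$ exactly to $g_i^{c_i}$. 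Your proposal, however, stops short of exhibiting any such $\sigma_i$, and this is where it matters.

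The genuine gap is the case $c_i=1$ with $\epsilon_i$ nontrivial, which the hypotheses allow (${\bf c}\in I_n$ only forbids two odd entries; take e.g.\ ${\bf c}=(1,2,\ldots,2)$ with $\epsilon_2=((2,1)\;(2,2))$). There your reduction to a conjugator supported on $R_i$ is not merely unfinished but impossible, and your own cycle-type computation shows why: $\epsilon_i g_i$ fixes $(i,1)$, so it has exactly one finite orbit inside $R_1\cup R_i$, and matching cycle types forces that fixed point to be paired with fixed points lying on the \emph{other} rays. A $\sigma_i$ supported on $R_i$ maps $R_1\cup R_i$ to itself and fixes every other ray pointwise, so the conjugate $\sigma_i^{-1}(\epsilon_i g_i)\sigma_i=\alpha_i g_i$ would again have exactly one finite orbit inside $R_1\cup R_i$, with $\supp(\alpha_i)\subseteq R_1\cup R_i$ automatically. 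But for finitary $\alpha$ the sign of $\alpha$ equals the parity of the number of finite orbits of $\alpha g_i$ on the line $R_1\cup R_i$ (truncate to a long interval $\{(i,L),\ldots,(1,L)\}$, close the unique infinite orbit into a single cycle, and compare signs), so a single finite orbit forces $\alpha_i$ to be odd, i.e.\ $\sigma_i^{-1}(\epsilon_i g_i)\sigma_i\notin\Alt(X_n)g_i$. Hence when $c_i=1$ the conjugator must move $(i,1)$ into another ray $R_j$ --- which is exactly what the paper's proof does in this case --- and then it no longer commutes with the generator $\epsilon_j g_j^{c_j}$, so the clean factorisation over rays on which your whole argument rests breaks down, and the effect of the conjugator on that second generator has to be analysed as well.
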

\begin{proof} If there are $i\ne j$ such that $c_i=c_j=1$, then $\FSym(X_n)\le  G_{\bf c}$ and so $G_{\bf c}= F_{\bf c}$, i.e.\ ${\bf c}\not\in I_n$.

Now consider if $c_i=1$ for some $i$ and that $\epsilon_i =((i, 1)\;(i, 2))$. Then $\epsilon_ig_i$ fixes $(i,1)$. Moreover, we have $(i, m)\epsilon_ig_i=(i,m-1)$ for all $m\ge3$, $(i,2)\epsilon_ig_i=(1, 1)$, $(1, m)\epsilon_i=(1,m+1)$ for all $m\ge1$, and that $\epsilon_ig_i$ fixes all other points in $X_n$. This allows us to relabel $X_n$ so that the point $(i,1)$ becomes part of another ray $j\in \{2, \ldots, n\}$.

If $c_i\ne 1$ and $\epsilon_i=((i,1)\;(i,2))$, then we can relabel $R_i$ as $R_i'$ by swapping the labels on the points $(i, 1)$ and $(i, 2)$ so that $\epsilon_ig_i^{c_i}$ acts on $R_i'\cup R_1$ in the same way as $g_i^{c_i}$ acts on $R_i\cup R_1$.
\end{proof}

\begin{lem} Let $k\in \{2, 3, \ldots\}$, $\epsilon=((1, 1)\;(1, 2))$, and $U=\langle \Alt(X_n), \epsilon g_2^k \rangle\le_f H_2$. Then $U$ is isomorphic to $\langle \Alt(X_2), g_2^k\rangle$.
\end{lem}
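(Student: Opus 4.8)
```latex
The plan is to mimic the relabelling strategy used in the preceding Proposition, adapted to the special structure of $H_2$. The key observation is that $H_2$ acts on $X_2 = \{1, 2\}\times\N$, which is really just a two-ended line indexed by $\Z$: the two rays $R_1$ and $R_2$ glue together along their base points, and $g_2$ acts as a shift. So I would first fix an explicit bijection $\phi\colon X_2 \to \Z$ sending $(1, m)\mapsto m-1$ and $(2, m)\mapsto -m$, under which $g_2$ becomes the shift $z\mapsto z+1$, and $\epsilon = ((1,1)\;(1,2))$ becomes the transposition of $\phi((1,1))=0$ and $\phi((1,2))=1$.

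Next I would compute the permutation $\epsilon g_2^k$ explicitly on the line and compare it with the plain shift $t^k \colon z \mapsto z+k$ realised by $g_2^k$. The element $\epsilon g_2^k$ agrees with $t^k$ everywhere except on the finite set where $\epsilon$ acts nontrivially, so $\epsilon g_2^k$ and $g_2^k$ differ by a finitely supported permutation; concretely, $\epsilon g_2^k = \epsilon \cdot g_2^k$ where $\epsilon$ is an even or odd transposition. The main point is to produce a bijection $\psi\colon X_2 \to X_2$ (a relabelling of the underlying set) that conjugates $\epsilon g_2^k$ to $g_2^k$ and simultaneously normalises $\Alt(X_2)$. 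Since $\Alt(X_2) = \Alt(\Z)$ is preserved by any bijection of the set, conjugation by $\psi$ automatically sends $\Alt(X_2)$ to itself, so the only real task is to arrange $\psi^{-1}(\epsilon g_2^k)\psi = g_2^k$.

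To build $\psi$, I would argue as in the Proposition's final paragraph: one wants to ``absorb'' the transposition $\epsilon$ into a relabelling of the two exceptional points. Viewing $\epsilon g_2^k$ on the integer line, it is a bijection all of whose orbits are infinite (both forwards and backwards unbounded), exactly like the shift $t^k$, which has $k$ orbits each isomorphic to a copy of $\Z$. The structural fact I would invoke is that two permutations of a countable set that are both ``shift-like'' with the same number of doubly-infinite orbits are conjugate in $\Sym(X_2)$; since $\epsilon g_2^k$ and $g_2^k = t^k$ both decompose into exactly $k$ doubly-infinite orbits, there is a bijection $\psi$ conjugating one to the other. Then conjugation by $\psi$ gives an isomorphism $U = \langle \Alt(X_2), \epsilon g_2^k\rangle \to \langle \Alt(X_2), g_2^k\rangle = \langle \Alt(X_2), g_2^k\rangle$, as required.

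The step I expect to be the main obstacle is verifying that $\epsilon g_2^k$ really does have exactly $k$ doubly-infinite orbits and no finite orbits, so that it is genuinely conjugate to $t^k$ and not to something with extra fixed points or finite cycles. This requires a careful orbit analysis near the two points $\phi((1,1))=0$ and $\phi((1,2))=1$ where $\epsilon$ and $g_2^k$ interact: one must check that inserting the transposition does not split an infinite orbit into a finite piece plus an infinite piece, nor create a new fixed point. Once the orbit structure is confirmed to match that of $t^k$, the existence of the conjugating bijection $\psi$ is standard, and because any bijection of $X_2$ fixes $\Alt(X_2)$ setwise, the induced map is the desired isomorphism onto $\langle \Alt(X_2), g_2^k\rangle$.
```
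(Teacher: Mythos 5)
Your proposal is correct, and it takes a genuinely different route from the paper. The paper's proof is a one-line relabelling argument, parallel to the preceding proposition: swap the labels on the two points $(2,1)$ and $(2,2)$ of $R_2$. Your proof instead identifies $X_2$ with $\Z$ so that $g_2$ becomes the shift $t$, argues that $\epsilon g_2^k$ and $g_2^k$ have the same cycle type (no finite orbits, exactly $k$ doubly infinite orbits), and invokes the standard fact that cycle type is a complete conjugacy invariant in $\Sym(\Z)$ to produce a conjugating bijection $\psi$, which automatically normalises $\Alt(\Z)$. In fact your route is the more robust one: a parity count shows that no finitely supported relabelling can do the job, since for finitary $\rho$ we have $\rho^{-1}g_2^k\rho=\bigl(\rho^{-1}\cdot g_2^k\rho g_2^{-k}\bigr)g_2^k$, and $\rho^{-1}\cdot g_2^k\rho g_2^{-k}$ is always an \emph{even} finitary permutation, whereas $\epsilon$ is odd. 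So any conjugator taking $\epsilon g_2^k$ to $g_2^k$ must move infinitely many points, which is exactly what your orbit argument supplies; the paper's literal two-point swap cannot be the conjugating map, so your structural argument is not merely an alternative but a repair.

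The only thing you should not leave as an ``expected obstacle'' is the orbit count itself: it is a two-line check, and it is precisely where the hypothesis $k\ge 2$ enters. In your coordinates $\epsilon g_2^k=(0\;1)t^k$ sends $0\mapsto k+1$, $1\mapsto k$, and $z\mapsto z+k$ for every other $z$. For $k\ge 2$ there are no fixed points and no finite orbits (every forward trajectory tends to $+\infty$), and the orbits are the $k-2$ residue classes $r+k\Z$ with $r\not\equiv 0,1 \pmod{k}$ together with the two merged orbits $\{-mk\;:\;m\ge 0\}\cup\{mk+1\;:\;m\ge 1\}$ and $\{1-mk\;:\;m\ge 0\}\cup\{mk\;:\;m\ge 1\}$; that is exactly $k$ doubly infinite orbits, matching $t^k$. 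For $k=1$ the point $1$ is fixed by $(0\;1)t$, so the count fails, which is why the lemma excludes $k=1$ and why the paper's subsequent remark observes that any isomorphism in that case could not be induced by a permutation of $X_2$. With that paragraph inserted, your proof is complete.
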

\begin{proof} In a similar way to the preceding proof, we can relabel $R_2$ as $R_2'$ (by swapping the labels on the points $(2, 1)$ and $(2, 2)$) so that $\epsilon g_2^k$ acts on $R_1\cup R_2'$ in the same way that $g_2^k$ acts on $R_1\cup R_2$.
\end{proof}
\begin{rem}
The preceding lemma may hold for $k=1$. Such an isomorphism would not be induced by a permutation of $X_2$, however: any element $g$ in \linebreak$\langle \Alt(X_2), ((1, 1)\;(1, 2)) t\rangle$ with $\pi(g)=1$ cannot have an infinite orbit equal to $X_2$.
\end{rem}

\section{Generation of the groups $F_{\bf c}$ and $G_{\bf c}$}
We start with the case of $H_2$, first dealing with the `exceptional' case.
\begin{lem} The group $\langle \Alt(X_2), ((1, 1)\;(1, 2))g_2\rangle$ is $2$-generated.
\end{lem}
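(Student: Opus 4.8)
The plan is to exhibit a single $3$-cycle $a\in\Alt(X_2)$ such that $a$ together with $b:=((1,1)\;(1,2))g_2$ already generates $U:=\langle\Alt(X_2),b\rangle$. Since $a\in\Alt(X_2)\le U$ and $b\in U$, the inclusion $\langle a,b\rangle\le U$ is automatic, so it suffices to show $\Alt(X_2)\le\langle a,b\rangle$; and for that it is enough that the $b$-conjugates $\{a^{b^k}:k\in\Z\}$ generate $\Alt(X_2)$, as these all lie in $\langle a,b\rangle$ and then $\langle a,b\rangle\supseteq\langle\Alt(X_2),b\rangle=U$.

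First I would pin down the cycle structure of $b$. A direct computation from \eqref{thegi} shows that $b$ fixes the point $p:=(1,2)$ and permutes the remaining points as a single bi-infinite cycle: writing $y_j:=((1,1))b^{j}$ for $j\in\Z$, one gets $\{y_j:j\in\Z\}=X_2\setminus\{p\}$ with $b$ sending $y_j\mapsto y_{j+1}$. This lone fixed point is exactly what makes the case exceptional, and it dictates the choice of $a$: any even permutation supported away from $p$ would yield a group fixing $p$, hence strictly smaller than $U$, so $a$ must move $p$. I therefore set $a:=(p\;y_0\;y_1)=((1,2)\;(1,1)\;(1,3))$. Since conjugation by $b$ (under the right action) sends a cycle $(x_1\;x_2\;x_3)$ to $((x_1)b\;(x_2)b\;(x_3)b)$ and fixes $p$, we obtain $a^{b^k}=(p\;y_k\;y_{k+1})$ for every $k\in\Z$.

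It then remains to prove that the family of $3$-cycles $c_k:=(p\;y_k\;y_{k+1})$, $k\in\Z$, generates $\Alt(X_2)$. Consecutive members $c_k$ and $c_{k+1}$ share the two points $p$ and $y_{k+1}$, so on each finite set $\Omega_N:=\{p\}\cup\{y_j:|j|\le N\}$ the cycles $c_k$ with $-N\le k\le N-1$ form an overlapping chain covering $\Omega_N$. A standard induction on $N$ (at each stage one adjoins a single new point via a $3$-cycle through two points already under control, and uses that $\langle\Alt_m,\text{such a }3\text{-cycle}\rangle=\Alt_{m+1}$) shows these generate $\Alt(\Omega_N)$. Passing to the union over $N$ gives $\langle c_k:k\in\Z\rangle=\Alt(X_2)$, completing the proof that $\langle a,b\rangle=U$. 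Finally $U$ is noncyclic, being nonabelian (it contains $\Alt(X_2)$), so $d(U)=2$.

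The only genuine obstacle is the first computational step: correctly identifying that $b$ has a unique fixed point, and recognizing that $a$ must be chosen to involve it. Once this is in place, the generation statement for the chain of overlapping $3$-cycles is routine.
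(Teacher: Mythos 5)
Your proof is correct and follows essentially the same route as the paper's: both exhibit a two-element generating set consisting of a single $3$-cycle through the unique fixed point of the twisted shift and the twisted shift itself, then use the conjugates of that $3$-cycle under powers of the shift — an overlapping chain of $3$-cycles all passing through the fixed point — to generate $\Alt(X_2)$. The differences are cosmetic: the paper transfers to the model $\langle \Alt(\Z), (1\;2)t\rangle$ and works with the pair $\{(0\;1\;2), (0\;1)t\}$, assembling the chain into the cycles $(0\,,1\,,k+1)$ by an explicit recursion, whereas you stay in $X_2$ coordinates with the defining element $((1,1)\;(1,2))g_2$ and finish by the standard one-point-at-a-time induction.
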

\begin{proof} Note that $\langle \Alt(X_2), ((1, 1)\;(1, 2))g_2\rangle\cong \langle \Alt(\Z), (1\;2)t\rangle\le \FSym(\Z)\rtimes\langle t\rangle$ where $t: z\rightarrow z+1$ for all $z\in\Z$. Our aim will be to show that $S=\{(0\;1\;2), (0\;1)t\}$ generates $\langle \Alt(\Z), (1\;2)t\rangle$. To do this, we will use $\langle S\rangle$ to construct all 3-cycles $(0\;a\;b)$ with $0<a<b$. With these elements we can then also produce every 3-cycle of the form $(a\;b\;c)$ where $0<a<b<c$, by conjugating $(0\;a\;b)$ by $(0\;c\;2c)$. Then every 3-cycle in $\Alt(\N\cup\{0\})$, or its inverse, is accounted for by such elements. Conjugation by some power of $(0\;1)t$ yields every 3-cycle in $\Alt(\Z)$, and we recall that $\Alt(\Z)$ is generated by the set of all 3-cycles with support in $\Z$.

In addition to our first simplification, note that it is sufficient to show that $(0, 1, k+1)\in \langle(0\;1\;2), (0\;1)t\rangle$ for every $k\in \N$. This is because any 3-cycle $(0\;a\;b)$ with $0<a<b$ will be conjugate, by $(0\;1)t$, to an element of the form $(0, 1, k+1)$. We start with $\sigma_1:=(0, k, k+1)$. If $k=1$, then we are done. Otherwise, conjugate $\sigma_1^{-1}$ by $(0, k-1, k)$ to obtain $\sigma_2=(0, k-1, k+1)$. If $k>2$, conjugate $\sigma_2^{-1}$ by $(0, k-2, k-1)$ to obtain $\sigma_3$. Continuing in this way yields the result.
\end{proof}
The following results conclude the $H_2$ case. We include the proof here as we will adapt it for $F_{\bf c}$ and $G_{\bf c}$. The notation $\Omega_{k}:=\{1, \ldots, k\}$ is helpful for these results.
\begin{lem} \cite[Lem. 3.6]{infspread} \label{lem1} Let $k\in \{3, 4, \ldots\}$ and $t: z\rightarrow z+1$ for all $z\in \Z$. Then $\langle \Alt(\Z), t^k\rangle$ is generated by $\Alt(\Omega_{2k}) \cup\{t^k\}$.
\end{lem}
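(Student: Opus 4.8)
The plan is to prove equality of the two groups $G := \langle \Alt(\Z), t^k\rangle$ and $H := \langle \Alt(\Omega_{2k}), t^k\rangle$ by showing each contains the other. One inclusion is immediate: since $\Alt(\Omega_{2k}) \le \Alt(\Z)$ and $t^k \in G$, we have $H \le G$. For the reverse inclusion it suffices to prove $\Alt(\Z) \le H$, and since $\Alt(\Z)$ is generated by its $3$-cycles, I would reduce everything to showing that each $3$-cycle $(a\ b\ c)$ with $a, b, c \in \Z$ lies in $H$.

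The key device is conjugation by $t^k$. Because $t$ acts as $z \mapsto z+1$, conjugating a permutation with support $S$ by $t^{mk}$ produces a permutation with support the translate $S + mk$; thus, writing $A_m := (\Alt(\Omega_{2k}))^{t^{mk}} \le H$, we obtain $A_m = \Alt(\{1 + mk, \ldots, 2k + mk\})$ for every $m \in \Z$. I would then record how consecutive windows overlap: the sets $\{1+mk, \ldots, 2k+mk\}$ and $\{1+(m+1)k, \ldots, 2k+(m+1)k\}$ meet in the $k$ points $\{1+(m+1)k, \ldots, 2k+mk\}$, and since $k \ge 3$ this overlap contains at least two points.

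Next I would invoke the standard generation fact that for finite $S, T \subseteq \Z$ with $|S \cap T| \ge 2$ one has $\langle \Alt(S), \Alt(T)\rangle = \Alt(S \cup T)$: fixing two common points $x, y$, each $3$-cycle $(x\ y\ p)$ lies in $\Alt(S)$ or $\Alt(T)$, and such $3$-cycles generate $\Alt(S \cup T)$. Applying this repeatedly to the chain of overlapping windows $A_{m_0}, A_{m_0+1}, \ldots, A_{m_1}$ shows that $\Alt$ of the contiguous block $\{1 + m_0 k, \ldots, 2k + m_1 k\}$ lies in $H$. For an arbitrary $3$-cycle $(a\ b\ c)$ I would choose $m_0, m_1$ so that $a, b, c$ all lie in this block; this is possible because the windows cover $\Z$, as each integer $n$ satisfies $1 + mk \le n \le 2k + mk$ for some $m$ (the constraint on $m$ spans an interval of length $2 - 1/k > 1$). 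Hence $(a\ b\ c) \in H$, giving $\Alt(\Z) \le H$ and therefore $G = H$.

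The only real content here is the overlapping-windows bookkeeping: checking that the translates by multiples of $k$ both cover $\Z$ and meet in at least two points, so that the alternating-group generation fact can be chained along the blocks. The easy inclusion and the reduction to $3$-cycles are routine.
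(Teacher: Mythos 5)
Your proof is correct. Note that this paper does not actually prove the lemma---it is imported by citation from \cite{infspread}---but your argument is the natural one and essentially the intended proof: conjugating $\Alt(\Omega_{2k})$ by powers of $t^k$ to obtain $\Alt(\{1+mk,\ldots,2k+mk\})$ for all $m\in\Z$, chaining the $k$-point overlaps of consecutive windows via the standard fact that $\langle \Alt(S), \Alt(T)\rangle = \Alt(S\cup T)$ whenever $|S\cap T|\ge 2$, and observing that the windows cover $\Z$, so every $3$-cycle lies in some contiguous block.
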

\begin{lem} \cite[Lem. 3.7]{infspread} \label{lem2} Let $k\in \N$ and $t: z\rightarrow z+1$ for all $z\in \Z$. Then $G_k:=\langle \Alt(\Z), t^k\rangle$ and $F_k:=\langle \FSym(\Z), t^k\rangle$ are 2-generated.
\end{lem}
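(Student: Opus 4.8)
The plan is to prove both assertions by producing an explicit two‑element generating set, reducing first to the finitely generated picture supplied by Lemma~\ref{lem1}. For $k\ge 3$ that lemma gives $G_k=\langle \Alt(\Omega_{2k}),t^k\rangle$, so it suffices to find a single even permutation $a$ (supported in $\Omega_{2k}$) together with one further element so that $\langle a,\,\cdot\,\rangle$ recovers all of $\Alt(\Omega_{2k})$ and $t^k$; the cases $k\in\{1,2\}$ I would dispose of by hand, noting $G_1=\langle (1\;2\;3),t\rangle$ and $F_1=\langle (1\;2),t\rangle$ (the latter being $H_2$), and checking $k=2$ directly on $\Omega_4$. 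The guiding model is the \emph{exceptional} case proved above, where $\langle (0\;1\;2),(0\;1)t\rangle$ was shown to contain every $3$-cycle by an inductive conjugation argument: the second generator was a translation pre‑composed with a permutation, and that ``twist'' is exactly what let conjugation both shift and reshuffle the seed $3$-cycle.

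For $G_k$ I would therefore take $a=(1\;2\;3)$ and $b=\sigma\,t^k$, where $\sigma$ is an even permutation of a fundamental domain $\Omega_k=\{1,\dots,k\}$ chosen so that conjugation by the powers of $b$ moves the support of $a$ across every residue class modulo $k$ as well as across consecutive blocks. Concretely, I would first build, inside $\langle a,b\rangle$, every $3$-cycle supported in a single block $\{1+jk,\dots,2k+jk\}$ by mimicking the inductive step of the exceptional lemma (producing $(0\;1\;k{+}1)$-type cycles and conjugating down), then translate these by powers of $t^k$ — which lies in $\langle a,b\rangle$ once $\sigma\in\langle a,b\rangle$ is recovered — to obtain $3$-cycles on all blocks. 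Since the blocks $\{1+jk,\dots,2k+jk\}$ overlap in $k\ge 3$ points, the alternating groups on adjacent blocks generate $\Alt$ of their union, and iterating yields $\Alt(\Z)$; combined with $t^k$ this gives $\langle a,b\rangle=G_k$.

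For $F_k$ the argument runs in parallel, with $a$ replaced by a transposition $(1\;2)$ and $\sigma$ taken to be an \emph{odd} permutation of $\Omega_k$, so that the composite generator $b=\sigma t^k$ has $\sgn(\sigma)=-1$. Tracking the parity homomorphism $\sgn\colon F_k\to\{\pm1\}$ (with $\sgn(t^k)=1$), the element $b$ surjects onto $\{\pm1\}$, while the same conjugation‑and‑translation scheme builds every transposition $(x\;y)$ with $x,y$ ranging over all residues modulo $k$; these generate $\FSym(\Z)$, and together with $t^k$ we obtain $F_k$. Alternatively one can deduce the $F_k$ statement from the $G_k$ one by adjoining parity through a single odd twist, since $[\FSym(\Z):\Alt(\Z)]=2$.

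The main obstacle, in both cases, is the \emph{bridging across the period}: a short permutation and its $t^k$‑translates keep their support within fixed offsets modulo $k$, so a naive single $3$-cycle or transposition generates only a proper, ``residue‑disconnected'' subgroup. Overcoming this is precisely the role of the twisting permutation $\sigma$, and the crux of the proof is verifying that a \emph{single} such $\sigma$ mixes all $k$ residue classes under iterated conjugation, uniformly in $k$ — while simultaneously respecting the parity constraint that distinguishes $G_k$ from $F_k$. Once the residue‑mixing and parity bookkeeping are established, the inductive $3$-cycle (respectively transposition) construction and the overlapping‑block generation of $\Alt(\Z)$ (respectively $\FSym(\Z)$) are routine.
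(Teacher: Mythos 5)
Your construction fails for a concrete group-theoretic reason, not merely for lack of detail. Take $k\ge 3$, so that both your seed $a=(1\;2\;3)$ and your twist $\sigma$ are supported inside the fundamental domain $\Omega_k=\{1,\ldots,k\}$, and set $b=\sigma t^k$, $B_m=\{1+mk,\ldots,k+mk\}$. Computing the conjugates $A_m:=b^{-m}ab^{m}$ one finds: $A_0=a$; for $m\ge 1$, $A_m$ is the translate of $\sigma^{-1}a\sigma$ into $B_m$; for $m\le -1$, $A_m$ is the translate of $a$ into $B_m$. The point is that after one application of $b$ the support has left $\Omega_k$ and never meets $\supp(\sigma)$ again, so from then on conjugation by $b$ is pure translation. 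Hence the $A_m$ have pairwise disjoint supports and pairwise commute, the abelian group $M=\langle A_m\;:\;m\in\Z\rangle\cong\bigoplus_{\Z}C_3$ is normalized by both $a$ and $b$, and therefore $M$ is the full normal closure of $a$ in $\langle a,b\rangle$. Thus $\langle a,b\rangle\cong C_3\wr\Z$, whose finitary part is abelian, so $\langle a,b\rangle\ne G_k$ for every choice of $\sigma$; the same argument with $a=(1\;2)$ and $\sigma$ odd gives $C_2\wr\Z\ne F_k$. For $k=3$ the failure is even starker: the only even permutations of $\{1,2,3\}$ are powers of $(1\;2\;3)$, so $\langle a,b\rangle=\langle (1\;2\;3),t^3\rangle$ outright. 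The obstruction you correctly identify (``residue-disconnection'') is not really about residues but about disjointness of supports forcing commutativity, and no twist confined to a single fundamental domain can ever reconnect supports that have been translated away. Your $k=1$ cases work precisely because there the $t$-translates of $\{1,2,3\}$ (resp.\ $\{1,2\}$) overlap one another; that overlap is irreparably lost once the step size $k$ exceeds the size of the moving supports. (There is also a circularity in your step ordering: you translate block-supported $3$-cycles by $t^k$, but $t^k\in\langle a,b\rangle$ only once $\sigma$ has been recovered, which already requires a large finitary subgroup.)

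The paper's proof makes the opposite design choice, and this is exactly what evades the obstruction: it keeps $t^k$ untwisted as one generator and concentrates all complexity in a single \emph{large} finitary permutation $\alpha_k=\prod_{i=0}^{r+1}t^{-2ik}\sigma_i t^{2ik}$, whose support is spread over $r+2=\binom{2k}{3}+2$ widely spaced blocks ($\sigma_0=(1\;3)$, $\sigma_{r+1}=(2\;3)$, and $\sigma_1,\ldots,\sigma_r$ conjugating $(1\;2\;3)$ to a generating family of $3$-cycles $\omega_1,\ldots,\omega_r$ of $\Alt(\Omega_{2k})$). Its translates $\beta_m=t^{2mk}\alpha_k t^{-2mk}$ overlap $\alpha_k$ in exactly one block at a time, so commutators collapse — all disjoint pieces cancel — to prescribed small permutations: $[\beta_{r+1},\alpha_k]=(1\;2\;3)$ and $\beta_m^{-1}(1\;2\;3)\beta_m=\omega_m$, which recovers $\Alt(\Omega_{2k})$ and hence, by Lemma \ref{lem1}, all of $G_k=\langle\alpha_k,t^k\rangle$; replacing $\sigma_0$ by $(1\;3)(4\;5)$ makes the element odd and yields $F_k$, and the cases $k\in\{1,2\}$ follow from $\langle t^d,\alpha_6\rangle\supseteq\langle t^6,\alpha_6\rangle\supseteq\Alt(\Z)$ for $d\in\{1,2,3\}$. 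If you wish to salvage a twisted-translation generator $\sigma t^k$, the twist $\sigma$ would likewise need support spanning unboundedly many blocks; it cannot be done with a permutation of one fundamental domain, which is the crux your proposal leaves unproved and which is, in fact, false.
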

\begin{proof} We start with $G_k$. We will show that we can find, for each $k\in \{3, 4, \ldots\}$, an $\alpha_k\in \Alt(\Z)$ such that $\langle t^k, \alpha_k\rangle$ contains $\Alt(\Z)$. Then, for $d\in \{1, 2, 3\}$, $\langle t^d, \alpha_6\rangle$ contains $\langle t^6, \alpha_6\rangle$ and so contains $\Alt(\Z)$. We can therefore fix some $k\in\{3, 4, \ldots\}$, meaning all $3$-cycles in $\Alt(\Omega_{2k})$ are conjugate.

Let $r=\binom{2k}{3}$ and let $\omega_1, \ldots, \omega_r$ be a choice of distinct $3$-cycles in $\Alt(\Omega_{2k})$ with $\omega_i\ne\omega_j^{-1}$ for every $i,j \in \{1, \ldots, r\}$. Thus $\langle \omega_1, \ldots, \omega_r\rangle=\Alt(\Omega_{2k})$. Set $\sigma_0=(1\;3)$ and $\sigma_{r+1}=(2\;3)$ and note that $(\sigma_{r+1}^{-1}\sigma_0^{-1}\sigma_{r+1})\sigma_0=(1\;2\;3)$. Now choose $\sigma_1, \ldots, \sigma_{r}\in \Alt(\Omega_{2k})$ so that for each $m\in\{1,\ldots, r\}$  we have $\sigma_m^{-1}(1\;2\;3)\sigma_m=\omega_m$.

Let $\alpha_k:=\prod_{i=0}^{r+1}t^{-2ik}\sigma_it^{2ik}$ and, for $m\in\{1,\ldots, r+1\}$, let $\beta_m:=t^{2mk}\alpha_k t^{-2mk}$. Then $\beta_{r+1}^{-1}\alpha_k^{-1}\beta_{r+1}\alpha_k=\sigma_{r+1}^{-1}\sigma_0^{-1}\sigma_{r+1}\sigma_0=(1\;2\;3)$ and, for $m\in\{1, \ldots, r\}$, we have that $\beta_m^{-1}(1\;2\;3)\beta_m=\omega_m$. Hence $\langle \alpha_k, t^k\rangle=G_k$.

We can now adapt the element $\alpha_k$ to an element $\gamma_k$ so, for each $k\in \{3, 4, \ldots\}$, we have that $\langle \gamma_k, t^k\rangle=F_k$. One way to do so is to set $\gamma_k:=(\sigma_0')(\prod_{i=1}^{r+1}t^{-2ik}\sigma_it^{2ik})$ where $\sigma_0'=(1\;3)(4\;5)$. This means that $[\sigma_{r+1}, \sigma_0']$ still equals $(1\;2\;3)$, and so $\Alt(\Z)\le \langle \gamma_k, t^k\rangle$, but that $\Alt(\Z)\cup\gamma_k(\Alt(\Z))=\FSym(\Z)\le \langle \gamma_k, t^k\rangle$ as well. 
\end{proof}
We now work with a fixed $n\in \{3, 4, \ldots\}$. The following well-known commutator identities will be helpful for a few of our remaining proofs.
\begin{equation}\label{comm1}
[a, bc]=a^{-1}(bc)^{-1}a(bc)=a^{-1}c^{-1}acc^{-1}a^{-1}b^{-1}abc=[a, c][a, b]^c
\end{equation}
\begin{equation}\label{comm2}
[ab, c]=(ab)^{-1}c^{-1}(ab)c=b^{-1}(a^{-1}c^{-1}ac)bb^{-1}c^{-1}bc=[a, c]^b[b, c]
\end{equation}
\begin{rem} \label{sgn} Let $a, b, c \in H_n$ and $\sgn$ denote the sign function on $\FSym(X_n)$. From the identities (\ref{comm1}) and (\ref{comm2}) above, we have that $\sgn([a, bc])=\sgn([a, b])\sgn([a, c])$ and $\sgn([ab, c])=\sgn([a, c])\sgn([b, c])$.
\end{rem}
Recall that ${\bf c}\in\N^{n-1}$ is in $I_n$ if and only if $G_{\bf c}\ne F_{\bf c}$.
\begin{lem} \label{setIn} Let ${\bf c}=(c_2, \ldots, c_n)\in\N^n$. We have that ${\bf c}\in I_n$ if and only if $[g_i^{c_i}, g_j^{c_j}]\in\Alt(X_n)$ for all $i, j \in \{2, \ldots, n\}$.
\end{lem}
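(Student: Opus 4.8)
Write $h_i:=g_i^{c_i}$ for $i=2,\ldots,n$. The plan is to pass to the quotient $Q:=G_{\bf c}/\Alt(X_n)$ and reduce the claimed equivalence to the single statement that $Q$ is abelian if and only if the homomorphism induced by $\pi$ on $Q$ is injective. Since $\Alt(X_n)$ is normal in $H_n$ by Lemma \ref{monolithic} and is contained in $G_{\bf c}$ by definition, $Q$ is a group generated by the images $\bar h_2,\ldots,\bar h_n$, and the restriction of $\pi$ descends to a homomorphism $\psi\colon Q\to\Z^{n-1}$ with $\psi(\bar h_i)=c_ie_{i-1}$ and $\ker\psi=(G_{\bf c}\cap\FSym(X_n))/\Alt(X_n)$.

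First I would set up two dictionaries. On the left-hand side, because $[\FSym(X_n):\Alt(X_n)]=2$ and $\Alt(X_n)\le G_{\bf c}$, the intersection $G_{\bf c}\cap\FSym(X_n)$ is either $\Alt(X_n)$ or all of $\FSym(X_n)$; it is the latter exactly when $G_{\bf c}$ contains an odd permutation, in which case $G_{\bf c}\supseteq\langle\FSym(X_n),h_2,\ldots,h_n\rangle=F_{\bf c}$ and hence $G_{\bf c}=F_{\bf c}$. As $G_{\bf c}\le F_{\bf c}$ always holds, this yields ${\bf c}\in I_n\iff G_{\bf c}\cap\FSym(X_n)=\Alt(X_n)\iff\ker\psi=1$. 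On the right-hand side, each $[h_i,h_j]$ lies in $\FSym(X_n)=\ker\pi$ because $\Z^{n-1}$ is abelian, so $[g_i^{c_i},g_j^{c_j}]$ is even precisely when its image $[\bar h_i,\bar h_j]$ is trivial in $Q$; thus the condition that $[g_i^{c_i},g_j^{c_j}]\in\Alt(X_n)$ for all $i,j$ says exactly that $Q$ is abelian.

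With these translations the lemma becomes the equivalence $\ker\psi=1\iff Q$ is abelian, which I would prove in both directions. If $\ker\psi=1$ then $\psi$ embeds $Q$ into the abelian group $\Z^{n-1}$, so $Q$ is abelian. Conversely, if $Q$ is abelian, consider the surjection $\Z^{n-1}\twoheadrightarrow Q$ sending the $i$-th standard basis vector to $\bar h_i$; composing with $\psi$ gives the endomorphism of $\Z^{n-1}$ with matrix $\mathrm{diag}(c_2,\ldots,c_n)$, which is injective since each $c_i\ne0$. Injectivity of the composite forces the surjection to be an isomorphism, whence $Q\cong\Z^{n-1}$ and $\ker\psi=1$.

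The step to watch is this converse. A priori $Q$ could be the abelian group $C_2\times\Z^{n-1}$, which contains $\FSym(X_n)/\Alt(X_n)\cong C_2$ nontrivially while still having all generator commutators trivial; such a $Q$ would satisfy the right-hand condition but not the left-hand one, contradicting the lemma. What excludes it is precisely that $Q$ is generated by only the $n-1$ elements $\bar h_2,\ldots,\bar h_n$, whose $\psi$-images $c_ie_{i-1}$ are $\Z$-linearly independent; the rank count built into the composite map above is what collapses the spurious $C_2$. Granting this, the two dictionaries combine to give both implications of the statement.
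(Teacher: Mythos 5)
Your proof is correct, and it takes a genuinely different route from the paper's. The easy direction matches the paper (an odd commutator puts an odd finitary permutation in $G_{\bf c}$, forcing $G_{\bf c}=F_{\bf c}$), but for the hard direction the paper stays inside $H_n$ and argues at the level of words: it presents $\Z^{n-1}$ as $\langle t_2,\ldots,t_n\mid [t_i,t_j]\rangle$, takes $\alpha\in\langle g_2^{c_2},\ldots,g_n^{c_n}\rangle\cap\FSym(X_n)$, observes that the corresponding word lies in the normal closure of the commutator relators, and then uses multiplicativity of the sign function (Remark \ref{sgn}) to conclude $\alpha$ is even. You instead quotient by $\Alt(X_n)$ at the outset and translate both sides into statements about $Q=G_{\bf c}/\Alt(X_n)$: the commutator hypothesis says exactly that $Q$ is abelian (since the $\bar h_i$ generate $Q$), and ${\bf c}\in I_n$ says exactly that $\psi\colon Q\to\Z^{n-1}$ is injective; the surjection $\Z^{n-1}\twoheadrightarrow Q$ whose composite with $\psi$ is $\mathrm{diag}(c_2,\ldots,c_n)$ then forces everything to be an isomorphism. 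This packaging replaces the word-level normal-closure argument and the sign computation with an elementary fact about free abelian groups, and it also absorbs a step the paper leaves implicit (that controlling $\langle S\rangle\cap\FSym(X_n)$ controls $G_{\bf c}\cap\FSym(X_n)$, via $G_{\bf c}=\Alt(X_n)\langle S\rangle$), since you work with $G_{\bf c}$ directly. The point you flag as delicate --- ruling out $Q\cong C_2\times\Z^{n-1}$ --- is precisely what the paper's presentation-plus-sign argument is doing, and your rank argument disposes of it correctly; what the paper's approach buys in exchange is an explicit description of how finitary elements of $\langle S\rangle$ decompose, which is in the same spirit as the commutator bookkeeping reused later in Lemma \ref{commsubgroup}.
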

\begin{proof} If there exist $i, j\in\{2, \ldots, n\}$ such that $[g_i^{c_i}, g_j^{c_j}]\not\in\Alt(X_n)$, then $G_{\bf c}$ contains an odd permutation and so $G_{\bf c}= F_{\bf c}$. Recall that $G_{\bf c}=\langle \Alt(X_n), g_2^{c_2}, \ldots, g_n^{c_n}\rangle$ and that we have a homomorphism $\pi: H_n\rightarrow \Z^{n-1}$ induced by sending $g_i$ to $e_{i-1}$ for $i=2, \ldots, n$. Then $\pi(G_{\bf c})\le_f\pi(H_n)=\Z^{n-1}$, and so $\pi(G_{\bf c})$ is free abelian of rank $n-1$. Define $S:=\{g_2^{c_2}, \ldots, g_n^{c_n}\}$ so that $\langle S\cup \Alt(X_n)\rangle = G_{\bf c}$ and $\pi(S)$ is a linearly independent set. Let $t_i:=\pi(g_i^{c_i})$ for $i=2, \ldots, n$. Thus
\begin{equation}\label{presentation}
\pi(\langle S\rangle)=\langle t_2, \ldots, t_n\mid R\rangle\text{ where }R=\{[t_i, t_j]\;:\;i\ne j\}.
\end{equation}

Now let $\alpha \in \langle S\rangle\cap \FSym(X_n)$. We will show that $\alpha \in \Alt(X_n)$, meaning that ${\bf c}\in I_n$. First, express $\alpha$ as a word $w$ in $S^{\pm1}$. Thus $\alpha=w(g_2^{c_2}, \ldots, g_n^{c_n})$ and $\pi(\alpha)=w(t_2, \ldots, t_n)$ for the same word $w$. But also $\pi(\alpha)=\underline{0}$ which, from (\ref{presentation}), means that $w(t_2, \ldots, t_n)$ is in the normal closure of $R$. Thus $w(t_2, \ldots, t_n)$ is a product of conjugates of powers of commutators in terms of $t_2, \ldots, t_n$. Then $\alpha=w(g_2^{c_2}, \ldots, g_n^{c_n})$ must also be a product of conjugates of powers of commutators in terms of $g_2^{c_2}, \ldots, g_n^{c_n}$. Our assumption that $[g_i^{c_i}, g_j^{c_j}]\in\Alt(X_n)$ for all $i, j \in \{2, \ldots, n\}$ together with Remark \ref{sgn} means that $\alpha \in \Alt(X_n)$.
\end{proof}
The following gives a clearer description of the set $I_n$.
\begin{lem} \label{setIn2} Let $i, j\in \{2, \ldots, n\}$ and $c_i, c_j\in \N$. Then $[g_i^{c_i}, g_j^{c_j}]\not\in\Alt(X_n)$ if and only if $c_i$ and $c_j$ are both odd. 
\end{lem}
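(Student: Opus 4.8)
The plan is to compute the commutator $\phi := [g_i^{c_i}, g_j^{c_j}]$ explicitly as an element of $\FSym(X_n)$ and then read off its sign. The case $i=j$ is trivial, since then $\phi = \id \in \Alt(X_n)$, so (as is implicit in the use of this lemma in Lemma \ref{setIn}) assume $i\neq j$. Writing $a := g_i^{c_i}$ and $b := g_j^{c_j}$, the key structural observation is that $a$ moves only the rays $R_1$ and $R_i$ (fixing $R_j$ and every point outside $R_1\cup R_i$), while $b$ moves only $R_1$ and $R_j$. Hence $a$ and $b$ interact solely through the shared ray $R_1$, the support of $\phi$ is contained in $R_1\cup R_i\cup R_j$, and we may reduce to the case $n=3$ with $(i,j)=(2,3)$.

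First I would record the explicit action: writing $c := c_i$ and $d := c_j$, the element $a$ sends $(1,m)\mapsto(1,m+c)$, sends $(i,m)\mapsto(i,m-c)$ for $m>c$, and ``feeds in'' $(i,m)\mapsto(1,c-m+1)$ for $1\le m\le c$; symmetrically for $b$ with $d$ on $R_1\cup R_j$. Rather than composing the four factors directly, I would compute the two products $P:=ab$ and $Q:=ba$ and observe that they agree everywhere except on the finite feed-in sets $\{(i,m):m\le c\}$ and $\{(j,m):m\le d\}$, which is exactly where the order of feeding into $R_1$ matters. Since $\phi = (ba)^{-1}(ab) = Q^{-1}P$, the permutation $\phi$ is characterised by $((x)Q)\phi=(x)P$ for all $x$; it therefore fixes every point outside the $Q$- and $P$-images of those two feed-in sets, and is pinned down on them by a short index calculation.

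The payoff of this bookkeeping is that $\phi$ acts only on the initial segment $(1,1),\ldots,(1,c+d)$ of $R_1$, where it is exactly the cyclic rotation by $d$ positions of these $c+d$ points; equivalently, the permutation swapping the two adjacent blocks $\{(1,1),\ldots,(1,c)\}$ and $\{(1,c+1),\ldots,(1,c+d)\}$. Its sign is thus $(-1)^{cd}$ — each of the $c$ elements of the first block crosses each of the $d$ elements of the second, producing $cd$ inversions — which equals $-1$ precisely when both $c$ and $d$ are odd, as required. I expect the only real obstacle to be the careful index arithmetic in identifying $\phi$ as this clean rotation: tracking which feed-in set contributes which half of the map and verifying that the two pieces assemble into a bijection of $\{1,\ldots,c+d\}$ with disjoint domains and disjoint ranges. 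Once that is established, the sign computation is immediate, whether via the inversion count above or, alternatively, via the cycle count $\gcd(c+d,d)=\gcd(c,d)$, which gives sign $(-1)^{(c+d)-\gcd(c,d)}$ of the same parity.
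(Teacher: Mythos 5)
Your proposal is correct, but it takes a genuinely different route from the paper. The paper's proof is a two-line reduction: it records the four base cases ($[g_i,g_j]$ is an odd permutation, while $[g_i^2,g_j^2]$, $[g_i,g_j^2]$, $[g_i^2,g_j]$ are even) and then repeatedly applies the sign identities of Remark \ref{sgn}, namely $\sgn([a,bc])=\sgn([a,b])\sgn([a,c])$ and $\sgn([ab,c])=\sgn([a,c])\sgn([b,c])$, to reduce arbitrary exponents to $1$ or $2$ according to parity; in effect $\sgn([g_i^{c_i},g_j^{c_j}])=\sgn([g_i,g_j])^{c_ic_j}=(-1)^{c_ic_j}$. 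You instead compute the commutator globally, and your computation checks out: with $P=ab$ and $Q=ba$, the two products agree except on the feed-in sets $\{(i,m):m\le c_i\}$ and $\{(j,m):m\le c_j\}$, the support of $Q^{-1}P$ is the $Q$-image of where they differ, and the commutator is exactly the block swap of $\{(1,1),\ldots,(1,c_i)\}$ and $\{(1,c_i+1),\ldots,(1,c_i+c_j)\}$, of sign $(-1)^{c_ic_j}$ (indeed the support lies entirely in $R_1$, not merely in $R_1\cup R_i\cup R_j$). What each approach buys: the paper's argument is essentially free, since Remark \ref{sgn} is already established and needed elsewhere (in Lemma \ref{setIn} and Proposition \ref{genofGc}), whereas yours is self-contained and yields strictly more information --- the exact cycle type of $[g_i^{c_i},g_j^{c_j}]$, namely $\gcd(c_i,c_j)$ cycles of length $(c_i+c_j)/\gcd(c_i,c_j)$, recovering in particular $[g_2,g_3]=((1,1)\,(1,2))$ --- at the cost of the index bookkeeping you anticipate.
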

\begin{proof} Note that $[g_i, g_j]\in\FSym(X_n)\setminus\Alt(X_n)$ whereas $[g_i^2, g_j^2], [g_i, g_j^2], [g_i^2, g_j] \in \Alt(X_n)$. Now, by repeatedly applying Remark \ref{sgn}, we can reduce $c_i$ and $c_j$ to be either 1 or 2, depending on whether they are odd or even respectively.
\end{proof}
Combining Lemma \ref{setIn} and Lemma \ref{setIn2}, we see that ${\bf c}=(c_2, \ldots, c_n)\in \N^{n-1}$ is in $I_n$ if and only if at least two of $c_2, \ldots, c_n$ are odd.
\begin{prop}\label{genofGc} Given ${\bf c}=(c_2, \ldots, c_n)\in \N^{n-1}$, we have that $d(G_{\bf c})=d(H_n)$.
\end{prop}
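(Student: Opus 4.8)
The plan is to prove $d(G_{\bf c}) \ge n-1$ and $d(G_{\bf c}) \le n-1$ separately, recalling that $d(H_n) = n-1$. The lower bound is immediate: $\pi$ restricts to a surjection of $G_{\bf c}$ onto $\pi(G_{\bf c}) = \langle c_2e_1, \ldots, c_ne_{n-1}\rangle$, which is free abelian of rank $n-1$ since the $c_ie_{i-1}$ are linearly independent; as $d$ does not increase under passage to a quotient and $d(\Z^{n-1}) = n-1$, we obtain $d(G_{\bf c}) \ge n-1$.

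For the upper bound I would exhibit $n-1$ generators. In the $H_2$ case Lemma \ref{lem2} uses two generators, a single shift $t^k$ and a separate finitary element $\alpha_k$, the latter packing the $3$-cycles needed to build $\Alt(\Z)$; here $n-1$ generators leave no room for a free-standing finitary generator, so the finitary data must be folded into the shifts. I would therefore set $h_i := \alpha_i g_i^{c_i}$ for $i = 2, \ldots, n$, with finitary $\alpha_i \in \FSym(X_n)$ to be chosen, and aim to show $\langle h_2, \ldots, h_n\rangle = G_{\bf c}$. The mechanism that makes folding possible, and which fails for $n=2$, rests on the following contrast. Conjugating one shift by a power of another yields a finitary element whose support is unbounded; for example, expanding with (\ref{comm1}) gives $[g_2^{c_2}, g_3^{mc_3}] = \prod_{i=0}^{m-1}[g_2^{c_2}, g_3^{c_3}]^{g_3^{ic_3}}$, a finitary permutation whose support grows linearly in $m$. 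By contrast, conjugating a \emph{finitary} element by any $h_i$ returns a finitary element whose support is simply transported by the permutation $h_i$. Hence, as soon as one nontrivial finitary element lies in $\langle h_2, \ldots, h_n\rangle$, it may be moved throughout $X_n$ with no error terms.

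To produce such a seed I would choose the $\alpha_i$ so that the commutator $[h_2, h_3]$ — which lies in $\FSym(X_n)$ because its image under $\pi$ vanishes, and which (\ref{comm1})--(\ref{comm2}) express in terms of $\alpha_2, \alpha_3$ and $[g_2^{c_2}, g_3^{c_3}]$ — is a single $3$-cycle $\phi_0 \in \Alt(X_n)$, and simultaneously so that $\langle h_2, \ldots, h_n\rangle$ acts highly transitively on $X_n$. High transitivity makes every $3$-cycle a $\langle h_2, \ldots, h_n\rangle$-conjugate of $\phi_0$, and by the clean-transport observation all of these lie in $\langle h_2, \ldots, h_n\rangle$; since $\Alt(X_n)$ is generated by its $3$-cycles we obtain $\Alt(X_n) \le \langle h_2, \ldots, h_n\rangle$. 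Finally $g_i^{c_i} = \alpha_i^{-1}h_i \in \langle h_2, \ldots, h_n\rangle$ for each $i$, so $\langle h_2, \ldots, h_n\rangle \supseteq \langle \Alt(X_n), g_2^{c_2}, \ldots, g_n^{c_n}\rangle = G_{\bf c}$, and the reverse inclusion is clear; thus $G_{\bf c}$ is generated by the $n-1$ elements $h_2, \ldots, h_n$.

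The main obstacle is the construction of the modifiers $\alpha_i$: one must choose them so that $[h_2, h_3]$ is a clean $3$-cycle and $\langle h_2, \ldots, h_n\rangle$ is highly transitive (equivalently, $k$-transitive for every $k$), uniformly over all ${\bf c} \in \N^{n-1}$. This is genuinely delicate in the degenerate cases — for instance when every $c_i$ is even the unmodified group $\langle g_2^{c_2}, \ldots, g_n^{c_n}\rangle$ is intransitive, its orbits refining a congruence along the rays, so the $\alpha_i$ must be used to fuse these orbits while keeping the seed a single $3$-cycle and respecting parity via Remark \ref{sgn}. Securing high transitivity for every choice of constants is where the substance of the argument lies.
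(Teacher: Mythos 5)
Your lower bound is correct, and your overall strategy --- absorbing finitary modifiers into the infinite-order generators, producing a finitary seed as a commutator, and then transporting it around $X_n$ --- is the same idea that drives the paper's proof. The gap is that you never construct the modifiers $\alpha_i$, and the property you ask them to deliver, high transitivity of $\langle h_2,\ldots,h_n\rangle$ on $X_n$, is not an available tool: for groups of this kind the natural route to high transitivity is to first show that the group contains $\Alt(X_n)$, which is exactly what the high transitivity is supposed to give you. So the plan is circular precisely at the point you yourself identify as ``where the substance of the argument lies.'' The paper's proof uses no transitivity argument at all. It first reduces to a finite block: with $k=2c_3$ (after arranging $c_3=\max_i c_i$, so $c_3\ne 1$) and $\Omega^*=\{(1,1),\ldots,(1,2k)\}$, Lemma \ref{lem1} gives $\Alt(R_1)\le\langle \Alt(\Omega^*), g_3^{c_3}\rangle$, and conjugation by suitable words $g_2^{d_2}\cdots g_n^{d_n}$ pushes any finite support into $R_1$, so it suffices that the $(n-1)$-generated subgroup contain $\Alt(\Omega^*)$. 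It then perturbs only \emph{one} generator, $g_2^{c_2}\mapsto \alpha g_2^{c_2}$, where $\alpha$ is an explicit product of conjugating permutations placed in the pairwise disjoint sets $(\Omega^*)g_3^{-4jk}$, chosen so that commutators among the translates $g_3^{-4mk}[\alpha g_2^{c_2}, g_3^{2k}]g_3^{4mk}$ recover, one at a time, a finite list of $3$-cycles generating $\Alt(\Omega^*)$. This finite, explicit bookkeeping (adapted from Lemma \ref{lem2}) is exactly what your appeal to high transitivity stands in for; without it the proposition has not been proved.

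Separately, one of your stated sub-goals is impossible in some cases. By Remark \ref{sgn}, applied through the decompositions (\ref{comm1}) and (\ref{comm2}), any commutator with a finitary entry is even, so $\sgn([\alpha_2 g_2^{c_2}, \alpha_3 g_3^{c_3}])=\sgn([g_2^{c_2}, g_3^{c_3}])$ no matter how the finitary $\alpha_2,\alpha_3$ are chosen; and by Lemma \ref{setIn2} this sign is $-1$ exactly when $c_2$ and $c_3$ are both odd. Hence if all the $c_i$ are odd, no commutator $[h_i,h_j]$ can ever be a $3$-cycle, and the seed must instead be an odd permutation --- which is in fact what the paper exploits in its final paragraph, where in this situation the target group is $F_{\bf c}=G_{\bf c}$. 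Note also that you flag the all-$c_i$-even case as the delicate one for parity, but that is backwards: there the commutator is even and a $3$-cycle seed is consistent; it is the case of two odd exponents where parity obstructs your construction.
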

\begin{proof} We start with the case that ${\bf c}\in I_n$. By possibly relabelling the branches of $X_n$, set $c_3:=\max\{c_2, \ldots, c_n\}$. Thus $c_3\ne1$, as otherwise $c_2=c_3=1$ and ${\bf c}\not\in I_n$. Let $k:=2c_3$ and $\Omega^*:=\{(1, 1), \ldots, (1, 2k)\}$. Thus $2k\ge 8$ and all 3-cycles in $\Alt(\Omega^*)$ are conjugate in $\Alt(\Omega^*)$. We claim that $\Alt(X_n)\le \langle \Alt(\Omega^*), g_2^{c_2}, \ldots, g_n^{c_n}\rangle$. By Lemma \ref{lem1}, $\langle \Alt(\Omega^*), g_3^{c_3}\rangle$ is a subgroup of $H_n$ which generates $\langle \Alt(R_1\cup R_3), g_3^{c_3}\rangle \cong\langle \Alt(\Z), t^{c_3}\rangle$. Most importantly, $\langle \Alt(\Omega^*), g_3^{c_3}\rangle$ contains $\Alt(R_1)$. Now, given any $\sigma\in \FSym(X_n)$, there exists a word of the form $g_2^{d_2}\ldots g_n^{d_n}$ for some $d_i\in c_i\N$ which conjugates $\sigma$ to $\sigma'$, where $\supp(\sigma')\subset R_1$. Hence $\langle \Alt(\Omega^*), g_2^{c_2}, \ldots, g_n^{c_n}\rangle=G_{\bf c}$. Our next aim is to adapt the proof of Lemma \ref{lem2}. In particular, we will show that there exists a $\sigma\in \Alt(X_n)$ such that $\langle \sigma g_2^{c_2}, g_3^{c_3}, \ldots, g_n^{c_n}\rangle =G_{\bf c}$. For any $\omega\in \FSym(X_n)$, let $\sigma_\omega:=[\omega g_2^{c_2}, g_3^{2k}]$, which we can calculate using (\ref{comm2}) as
\begin{equation}\label{keyelement}
[\omega, g_3^{2k}]^{g_2^{c_2}}[g_2^{c_2}, g_3^{2k}]=g_2^{-c_2}(\omega^{-1}g_3^{-2k}\omega g_3^{2k})g_2^{c_2}[g_2^{c_2}, g_3^{2k}].
\end{equation}
Observe that $\supp([g_2^{c_2}, g_3^{2k}])\subseteq \{(1, 1), \ldots, (1, c_2+2k)\}$ and, since $k=2c_3>2c_2$, that $(1, 4k+1), (1, 4k+3)\not\in\supp([g_2^{c_2}, g_3^{k}])$. Let $\delta_0:=((1, 2k-c_2+1), (1, 2k-c_2+3))$ and note, using (\ref{keyelement}), that $\supp(\sigma_{\delta_0})\cap\{(1, m)\;:\;m=4k+2 \text{ or }m\ge4k+4\}=\emptyset$ and $\sigma_{\delta_0}$ swaps $(1, 4k+1)$ and $(1, 4k+3)$. We now restrict to those $\omega\in \Alt(X_n)$ with
$$\supp(\omega\delta_0)\subset \bigcup_{j\in\N}(\Omega^*)g_3^{-4jk}.$$
The following implications of this restriction are all important in relation to (\ref{keyelement}):
\begin{itemize}
\item $\supp(\sigma_\omega)\subset R_1\cup R_3$;
\item $\supp(\omega^{-1})\cap \supp(g_3^{-2k}\omega g_3^{2k})=\emptyset$;
\item $(x)g_2^{-c_2}\omega^{-1}g_2^{c_2}=(x)\omega^{-1}$ for all $x\in R_3$; and
\item $(x)g_2^{-c_2}(g_3^{-2k}\omega g_3^{2k})g_2^{c_2}=(x)g_3^{-2k}\omega g_3^{2k}$ for all $x\in R_3$.
\end{itemize}

Now, by mimicking the proof of Lemma \ref{lem2}, we can choose one such $\omega$, which we will denote by $\alpha$, so that $\Alt(R_1)\le\langle g_3^k, \sigma_{\alpha}\rangle$. Let $r=\binom{2k}{3}$ and $\omega_1, \ldots, \omega_r$ be a set of 3-cycles such that $\langle \omega_1, \ldots, \omega_r\rangle=\Alt(\Omega^*)$. Choose $\sigma_1, \ldots, \sigma_r\in\Alt(\Omega^*)$ such that $\sigma_i^{-1}((1, 1)\,(1, 2)\,(1, 3))\sigma_i=\omega_i$ for $i=1, \ldots, r$. Also set $\sigma_{r+1}:=((1, 2)\,(1, 3))$ and $\sigma_0:=((1, 1)\,(1, 3))$. Define $\delta_i:=g_3^{4ki}\sigma_ig_3^{-4ki}$ for $i=1, \ldots, r+1$ and $\alpha:=\delta_0\ldots\delta_{r+1}$. Define $\beta_m:=g_3^{-4mk}\sigma_\alpha g_3^{4mk}$ for $m=-1, \ldots, r+1$. Then, as in Lemma \ref{lem2}, we have that $[\beta_{r+1}, \beta_{-1}]=[\sigma_{d+1}, \sigma_0]$ and $\beta_i[\sigma_{d+1}, \sigma_0]\beta_i^{-1}=\omega_i$ for $i=1, \ldots, r$.

We end our proof with the case where ${\bf c}\not\in I_n$. From Lemma \ref{setIn} and Lemma \ref{setIn2}, there then exist $i, j \in \{2, \ldots, n\}$ such that $[g_i^{c_i}, g_j^{c_j}]\in\FSym(X_n)\setminus\Alt(X_n)$. Using the element $\sigma_\alpha\in \Alt(X_n)$ defined above, we claim that $\langle \sigma_\alpha g_2^{c_2}, g_3^{c_3}, \ldots, g_n^{c_n}\rangle = F_{\bf c}$. We note that if $[g_i^{c_i}, g_j^{c_j}]\in\FSym(X_n)\setminus\Alt(X_n)$, then, by Remark \ref{sgn}, so is $[\sigma g_i^{c_i}, g_j^{c_j}]$ for any $\sigma\in \FSym(X_n)$. Hence $\langle \sigma_\alpha g_2^{c_2}, g_3^{c_3}, \ldots, g_n^{c_n}\rangle$ contains both $\Alt(X_n)$ and an odd permutation, and so $d(G_{\bf c})=d(H_n)$.
\end{proof}
We now look at the final case of determining $d(F_{\bf c})$ where ${\bf c}\in I_n$.
\begin{lem}\label{commsubgroup} Let ${\bf c}\in I_n$. Then the commutator subgroup of $F_{\bf c}$ equals $\Alt(X_n)$.
\end{lem}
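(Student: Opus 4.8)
The plan is to establish the two inclusions $\Alt(X_n)\subseteq [F_{\bf c}, F_{\bf c}]$ and $[F_{\bf c}, F_{\bf c}]\subseteq \Alt(X_n)$ separately. The first is a general fact about the alternating group, while the second carries the actual content and is cleanest to prove by passing to the quotient $\overline{F}:=F_{\bf c}/\Alt(X_n)$ and showing that it is abelian; the hypothesis ${\bf c}\in I_n$ enters only at this stage, through Lemma \ref{setIn}.

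For the inclusion $\Alt(X_n)\subseteq [F_{\bf c}, F_{\bf c}]$, I would use that $\Alt(X_n)\le F_{\bf c}$ and that $\Alt(X_n)$ is perfect: since $X_n$ is infinite, $\Alt(X_n)$ is a simple nonabelian group, and a simple nonabelian group equals its own commutator subgroup. Hence $\Alt(X_n)=[\Alt(X_n), \Alt(X_n)]\le [F_{\bf c}, F_{\bf c}]$, with no reference to ${\bf c}$ needed.

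For the reverse inclusion, the first point is that $\Alt(X_n)$ is characteristic in $\FSym(X_n)$ (it is its commutator subgroup) and $\FSym(X_n)$ is normal in $H_n$, so $\Alt(X_n)\trianglelefteq F_{\bf c}$ and $\overline{F}$ is a group; moreover $[F_{\bf c}, F_{\bf c}]\subseteq \Alt(X_n)$ holds if and only if $\overline{F}$ is abelian. Now $\overline{F}$ is generated by the images $\overline{g_i^{c_i}}$ for $i=2,\ldots,n$ together with the class $\overline{\tau}$ of a single transposition $\tau=((1,1)\,(1,2))$, because $\FSym(X_n)/\Alt(X_n)\cong C_2$ is generated by the class of any odd permutation. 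It therefore suffices to check that these generators pairwise commute. The element $\overline{\tau}$ is central: conjugation by any $u\in F_{\bf c}$, including an infinite-support element $g_i^{c_i}$, preserves the cycle type of any finitary permutation and hence fixes $\Alt(X_n)$, so it induces an automorphism of $\FSym(X_n)/\Alt(X_n)\cong C_2$, which is trivial as $\Aut(C_2)=1$. For the remaining pairs, the assumption ${\bf c}\in I_n$ gives, via Lemma \ref{setIn}, that $[g_i^{c_i}, g_j^{c_j}]\in\Alt(X_n)$ for all $i,j\in\{2,\ldots,n\}$, so every commutator $[\overline{g_i^{c_i}}, \overline{g_j^{c_j}}]$ is trivial in $\overline{F}$. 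A group generated by pairwise commuting elements is abelian, so $\overline{F}$ is abelian, giving $[F_{\bf c}, F_{\bf c}]\subseteq\Alt(X_n)$ and hence equality.

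The argument is short, and the only step demanding care—the mild obstacle—is the centrality of $\overline{\tau}$, where one must verify that the infinite-support generators $g_i^{c_i}$ act trivially on the two-element quotient $\FSym(X_n)/\Alt(X_n)$; this follows at once from the preservation of cycle type under conjugation in $\Sym(X_n)$ together with $\Aut(C_2)=1$. Everything else reduces to a direct appeal to Lemma \ref{setIn} and to the perfectness of the infinite alternating group.
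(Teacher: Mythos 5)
Your proof is correct, and it reaches the conclusion by a genuinely different (and arguably cleaner) route than the paper, built from the same raw ingredients. The paper proves the inclusion $[F_{\bf c},F_{\bf c}]\le\Alt(X_n)$ head-on: it takes two arbitrary elements $\sigma\prod_{i\in I}g_i^{p_i}$ and $\omega\prod_{j\in J}g_j^{q_j}$ of $F_{\bf c}$, expands their commutator into a product of conjugates of the elementary commutators $[\sigma,\omega]$, $[g_l^{p_l},\omega]$, $[\sigma,g_k^{q_k}]$, $[g_l^{p_l},g_k^{q_k}]$ via the identities (\ref{comm1}) and (\ref{comm2}), observes that the first three are even and the fourth is even by Lemma \ref{setIn} (exactly your input), and concludes by multiplicativity of the sign (Remark \ref{sgn}). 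This expansion is forced on the paper because it never passes to the quotient: containment of a commutator subgroup cannot be verified on commutators of generators alone unless the target is known to be normal. You make that normality explicit ($\Alt(X_n)\trianglelefteq F_{\bf c}$), after which the claim becomes ``$\overline{F}=F_{\bf c}/\Alt(X_n)$ is abelian,'' which \emph{can} be checked on the generators $\overline{\tau},\overline{g_2^{c_2}},\ldots,\overline{g_n^{c_n}}$; collapsing all of $\FSym(X_n)$ to the single involution $\overline{\tau}$, central because conjugation preserves cycle type and hence parity, removes any need for commutator identities or sign bookkeeping. Both arguments rest on the same three facts --- perfectness of $\Alt(X_n)$, Lemma \ref{setIn}, and invariance of parity under conjugation --- so the difference is organizational; what your quotient formulation buys is brevity and a standard abelianization viewpoint, while the paper's hands-on calculus stays within the machinery (Remark \ref{sgn} and the identities (\ref{comm1}), (\ref{comm2})) that it has already set up and reuses elsewhere, e.g.\ in Lemma \ref{setIn} and Proposition \ref{genofGc}.
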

\begin{proof}
Note that $[\Alt(X_n), \Alt(X_n)]=\Alt(X_n)$. Let $\sigma\prod_{i\in I} g_i^{p_i}$ and $\omega \prod_{j\in J} g_j^{q_j}$ be in $F_{\bf c}$, where $I, J\subseteq\{2, \ldots, n\}$, $\{p_i\;:\;i\in I\}\subset \Z$, $\{q_j\;:\;j\in J\}\subset \Z$, and $\sigma, \omega\in\FSym(X_n)$. Then their commutator, using (\ref{comm1}), will be a product of conjugates of elements of the form $[\sigma\prod_{i\in I} g_i^{p_i}, \omega]$ or $[\sigma\prod_{i\in I} g_i^{p_i}, g_{k}^{q_{k}}]$ for some $k\in J$.  Applying (\ref{comm2}) to these elements will produce a product of conjugates of elements of the form
$$[\sigma, \omega], [g_l^{p_l}, \omega], [\sigma, g_k^{q_k}],\text{ or }[g_l^{p_l}, g_k^{q_k}]$$
for some $l\in I$ and $k\in J$. The first 3 of these are clearly in $\Alt(X_n)$. The 4th is also in $\Alt(X_n)$ from our assumption that ${\bf c}\in I_n$ together with Lemma \ref{setIn}. Hence, from Remark \ref{sgn}, a commutator of any two elements in $F_{\bf c}$ is in $\Alt(X_n)$.
\end{proof}
\begin{lem}\label{needmoregens} Let ${ \bf c}=(c_2, \ldots, c_n)\in I_n$. Then $d(F_{\bf c})=d(H_n)+1$.
\end{lem}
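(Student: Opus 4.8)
The plan is to prove matching bounds. Since $d(H_n)=n-1$, the goal is $d(F_{\bf c})=n$. For the upper bound I would note that $F_{\bf c}=\langle \FSym(X_n), g_2^{c_2},\ldots,g_n^{c_n}\rangle$ and that $\FSym(X_n)=\langle \Alt(X_n), \tau\rangle$ for the transposition $\tau=((1,1)\,(1,2))$, because $[\FSym(X_n):\Alt(X_n)]=2$. Hence $F_{\bf c}=\langle G_{\bf c}, \tau\rangle$, and since Proposition \ref{genofGc} gives $d(G_{\bf c})=n-1$, adjoining the single element $\tau$ to an $(n-1)$-element generating set of $G_{\bf c}$ exhibits $F_{\bf c}$ as $n$-generated; thus $d(F_{\bf c})\le n$.

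For the lower bound I would pass to the abelianization and use $d(G)\ge d(G^{\mathrm{ab}})$ for every group $G$. By Lemma \ref{commsubgroup}, $[F_{\bf c},F_{\bf c}]=\Alt(X_n)$, so $F_{\bf c}^{\mathrm{ab}}=F_{\bf c}/\Alt(X_n)$. Since $\Alt(X_n)\le \FSym(X_n)\le F_{\bf c}$, this quotient fits into the short exact sequence
$$1\longrightarrow \FSym(X_n)/\Alt(X_n)\longrightarrow F_{\bf c}/\Alt(X_n)\longrightarrow \pi(F_{\bf c})\longrightarrow 1,$$
whose kernel is $\cong C_2$ (detected by $\sgn$) and whose quotient is $\pi(F_{\bf c})=\langle c_2e_1,\ldots,c_ne_{n-1}\rangle\cong\Z^{n-1}$. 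As $F_{\bf c}^{\mathrm{ab}}$ is abelian, its torsion subgroup must coincide with this copy of $C_2$ (anything torsion maps to $0$ in the torsion-free quotient $\Z^{n-1}$), so the structure theorem for finitely generated abelian groups gives $F_{\bf c}^{\mathrm{ab}}\cong C_2\times\Z^{n-1}$. Reducing modulo $2$ yields $F_{\bf c}^{\mathrm{ab}}/2F_{\bf c}^{\mathrm{ab}}\cong\mathbb{F}_2^{\,n}$, an $n$-dimensional $\mathbb{F}_2$-vector space, whence $d(F_{\bf c}^{\mathrm{ab}})=n$ and therefore $d(F_{\bf c})\ge n$.

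Combining the two bounds gives $d(F_{\bf c})=n=d(H_n)+1$. The argument is mostly bookkeeping once Lemma \ref{commsubgroup} is in hand; the only point needing care is that the copy of $C_2$ genuinely survives in the abelianization rather than collapsing, which is immediate since odd permutations lie outside $\Alt(X_n)=[F_{\bf c},F_{\bf c}]$, together with the verification that the extension of the free abelian group $\Z^{n-1}$ by $C_2$ is necessarily a direct product. I expect this splitting/torsion step to be the main (mild) obstacle, as it is precisely what pins down the abelianization as $C_2\times\Z^{n-1}$ and hence forces the extra generator.
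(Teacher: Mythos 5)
Your proposal is correct and follows essentially the same route as the paper: the upper bound by adjoining an odd permutation to a minimal generating set of $G_{\bf c}$ (using Proposition \ref{genofGc}), and the lower bound by computing that the abelianization is $C_2\times\Z^{n-1}$ via Lemma \ref{commsubgroup}. The only difference is that you spell out the short exact sequence and the splitting/torsion argument identifying $F_{\bf c}^{\mathrm{ab}}$, which the paper asserts in one line; your version of that step is valid.
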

\begin{proof} Recall that $d(G_{\bf c})=n-1$. Including an element from $\FSym(X_n)\setminus\Alt(X_n)$ to a minimal generating set for $G_{\bf c}$ therefore yields a generating set of $F_{\bf c}$ of size $n$. So we need only show that, with the given hypothesis, no generating set of $F_{\bf c}$ of size $n-1$ exists. From Lemma \ref{commsubgroup}, the abelianization of $F_{\bf c}$ is $\Z^{n-1}\times C_2$. Thus $F_{\bf c}$ cannot be generated by $n-1$ elements.
\end{proof}

\bibliographystyle{amsalpha}
\def\cprime{$'$}
\providecommand{\bysame}{\leavevmode\hbox to3em{\hrulefill}\thinspace}
\providecommand{\MR}{\relax\ifhmode\unskip\space\fi MR }
\providecommand{\MRhref}[2]{%
 \href{http://www.ams.org/mathscinet-getitem?mr=#1}{#2}
}
\providecommand{\href}[2]{#2}

\end{document}